\documentclass[a4paper,11pt]{article}

\usepackage{geometry}     
\usepackage{amsmath}
\usepackage{graphicx}
\usepackage{amssymb}
\usepackage{epstopdf}
\usepackage{float}
\usepackage{caption}
\captionsetup{justification=centering,font={small}}
\usepackage{hyperref}
\usepackage{mathtools}
\usepackage{color}
\usepackage{listings}
\usepackage{floatpag}
\usepackage[normalem]{ulem}
\usepackage{mathrsfs}
\usepackage{bbm}
\usepackage[dvipsnames]{xcolor}
\usepackage{todonotes}
\usepackage{enumitem}
\usepackage{amsthm}
\usepackage{nicefrac}
\usepackage{thmtools}
\usepackage{thm-restate}
\usepackage[nottoc,notlof]{tocbibind}

\newtheorem{theorem}{Theorem}[section]

\newtheorem{proposition}[theorem]{Proposition}

\newtheorem{remark}[theorem]{Remark}
\newtheorem{lemma}[theorem]{Lemma}
\newtheorem{assumption}[theorem]{Assumption}
\newtheorem*{theorem*}{Theorem}

\theoremstyle{definition}
\newtheorem{definition}[theorem]{Definition}

\usepackage{wasysym}
\usepackage{textcomp}

\usepackage[style=alphabetic,isbn=false,doi=false,maxnames=10,backend=biber]{biblatex}
\addbibresource{zb_bib.bib}
\renewbibmacro{in:}{}
\DeclareFieldFormat
  [article,inbook,incollection,inproceedings,patent,thesis,unpublished]
  {title}{#1\isdot}

\newcommand{\cL}{\mathcal{L}}

\newcommand{\de}{\operatorname{d}}

\newcommand{\sC}{\mathscr{C}}
\newcommand{\sG}{\mathscr{G}}

\newcommand{\sN}{\mathscr{N}}

\DeclareMathOperator{\E}{\mathbb{E}}
\DeclareMathOperator{\p}{\mathbb{P}}

\newcommand{\pl}{_{\mathbb{P}}^{\log N}}

\usepackage{parskip}
\usepackage{tcolorbox}
\usepackage{framed}
\usepackage{float}
\usepackage[caption = false]{subfig}
\usepackage{graphicx}
\usepackage{makecell}
\usepackage{appendix}
\usepackage{nicefrac}

\usepackage{soul}
\newcommand{\myul}[2][black]{\setulcolor{#1}\ul{#2}\setulcolor{black}}
\def\mathunderline#1#2{\color{#1}\underline{{\color{black}#2}}\color{black}}

\title{Discursive Voter Models on the Supercritical Scale-Free Network}
\author{John Fernley}

\begin{document}

\begin{center}
{\LARGE  Discursive Voter Models on the\\
\vspace{0.5em}
 Supercritical Scale-Free Network}\\
\vspace{1.5em}
{John Fernley\footnote{HUN-REN Alfr\'ed R\'enyi Institute of Mathematics,
			Re\'altanoda utca 13-15,
            Budapest,
            1053, 
            Hungary, {\tt fernley@renyi.hu}.}}
\end{center}

\vspace{2em}

\begin{abstract}
The voter model is a classical interacting particle system, modelling how global consensus is formed by local imitation. 
We  analyse the time to consensus for a particular family of voter models when the underlying structure is a scale-free inhomogeneous random graph, in the high edge density regime where this graph features a giant component. In this regime, we verify that the polynomial orders of consensus agree with those of their mean-field approximation in \cite{moinet2018generalized}.

This ``discursive'' family of models has a symmetrised interaction to better model discussions, and is indexed by a temperature parameter which, for certain parameters of the power law tail of the network's degree distribution, is seen to produce two distinct phases of consensus speed. Our proofs rely on the well-known duality to coalescing random walks and a novel bound on the mixing time of these walks, using the known fast mixing of the Erd\H{o}s-R\'enyi giant subgraph. Unlike in the subcritical case \cite{fernley2019voter} which requires tail exponent of the limiting degree distribution $\tau=1+\nicefrac{1}{\gamma}>3$ as well as low edge density, in the giant component case we also address the ``ultrasmall world'' power law exponents $\tau \in (2,3]$.

{\scriptsize
\medskip
\noindent{\emph{2010 Mathematics Subject Classification}:}
  Primary\, 60K35,  
  \ Secondary\, 05C80, 05C81, 82C22  

  \par
\noindent{\emph{Keywords:} voter model; inhomogeneous random graphs; rank one scale-free networks; interacting particle systems; random walk mixing}
	}
\end{abstract}

\section{Introduction}

The most studied voter model to be put on a graph involves \emph{pulling} opinions from a uniform random neighbour and so is dual to the simple random walk. However, any irreducible Markov dynamic can reasonably replace this simple random walk, and in fact the first work to put the voter model on a graph was with a symmetrised dynamic \cite{clifford1973model}.
These symmetrised \emph{push-pull} dynamics are better for modelling discussions, in that either side can be swayed, and so we call it the discursive voter model. This model was recently also considered by \cite{moinet2018generalized, sood2008voter} in the physics literature, and something similarly symmetrical also in the ``oblivious'' voter model of \cite{cooper2016discordant}.

On a scale-free network with $N$ vertices these models are very influenced by the presence of degrees polynomially large in $N$, and so we also introduce a ``temperature'' parameter to control the influence of these large polynomials -- with any $\theta \in \mathbb{R}$, a vertex which was interacting at rate $\de(v)$ in the version of \cite{sood2008voter} would instead interact at rate $\de(v)^\theta$. $\theta=0$, then, and also $\theta=1$, are the most studied models in this family. 

\begin{definition}[Discursive voter model]
Fix $\theta \in \mathbb{R}$ and a graph on vertices $[N]=\{1,\dots,N\}$. 
Given $\eta \in \{0,1\}^N$ and $i\neq j \in [N]$, define
\[ \eta^{i \leftarrow j} (k) = \left\{ \begin{array}{ll} \eta(j)  & \mbox{if } k = i, \\ \eta(k)  & \mbox{if } k \in [N] \setminus \{ i \} . \end{array}\right. \]
The discursive voter model $(\eta_t)_{t \geq 0}$ with temperature parameter $\theta$ is then the Markov process with state space $\{0,1\}^N$ and generator $\cL$ defined by
\[
\cL f(\eta)=
\sum_{i=1}^N 
\de(i)^{\theta}
\sum_{j \sim i}^N
\frac{1}{\de(i)}
\left(
\frac{1}{2} f\left(\eta^{j \leftarrow i}\right)
+
\frac{1}{2} f\left(\eta^{i \leftarrow j}\right)
-f\left(\eta\right)
\right).
\]
\end{definition}

This expression should be interpreted as each site $i$ starting a discussion at rate $\de(i)^{\theta}$: this involves picking a uniform neighbour, and then resolving to consensus of the two parties by picking a a uniform opinion from their two opinions. Thus opinion moves
\begin{equation}\label{eq_dual_dynamic}
i \rightarrow j \text{\quad at rate \quad}
\frac{\de(i)^{\theta -1}+\de(j)^{\theta -1}}{2}\mathbbm{1}_{i \sim j}
\end{equation}
which are also the dynamics of the dual coalescing walkers. This duality is by time reversal, see \cite[Section 4]{fernley2019voter} for a detailed explanation. The most important consequence is that consensus time $\tau_{\rm cons}^{[N]}$ starting with $N$ distinct opinions can be coupled with the coalescence time $\tau_{\rm coal}^{\mathbbm{1}_{[N]}}$ of a full occupation of walkers with the dual dynamic such that almost surely 
$
\tau_{\rm coal}^{\mathbbm{1}_{[N]}}=\tau_{\rm cons}^{[N]}.
$
 Our interest is primarily in the case of two distinct opinions, but we can trivially dominate this consensus time by the time of consensus from $N$ opinions.

Because the rates \eqref{eq_dual_dynamic} are symmetric in $i$ and $j$, their invariant distribution (on each component) is uniform and so the number of vertices with opinion $1$ over the whole network is a martingale \cite[Proposition 3.1]{cox2016convergence}.

For a rank one scale-free network, there are many similar models which are frequently seen to have similar properties as environments. 

\begin{definition}[Simplified Norros-Reittu graph $G_N$]\label{def_SNR}
The Simplified Norros-Reittu (SNR) graph, denoted $G_N$ and with parameters $\beta>0$, $\gamma \in [0,1)$, is the simple graph with vertex set $[N]=\{1,\dots,N\}$ and each edge $\{i,j\}$ independently present with probability
\[
p_{ij}=1-\exp \left( -\beta N^{2\gamma-1}i^{-\gamma}j^{-\gamma} \right)
\]
for every pair of distinct vertices $i,j \in [N]$.
\end{definition}

When $\gamma\neq 0$ we can translate to parameter $\tau=1+\nicefrac{1}{\gamma}$ and see that in $G_N$
\[
\mathbb{P}\left( \de(U_N)=k \right) \sim k^{-\tau},
\] 
where $U_N$ denotes a uniform random variable in $[N]$ 
and $k=k(N) \rightarrow \infty$ arbitrarily slowly in $N$. Thus $\tau$ is said to be the power law tail of the degree distribution.

Definition \ref{def_SNR} is equivalent to a wide range of more natural rank one models when $\gamma<\nicefrac{1}{2}$, because $1-e^{-x}\sim x$ and by applying \cite[Theorem 6.18]{van2016random}. See also \cite[Definition 2.2]{fernley2019voter} for the equivalence class. This definition from that class is particularly nice mathematically, though, as it is the simplified (``flattened'') version of the natural Norros-Reittu multigraph of Definition \ref{def_mnr}.

Our main theorems use the notation $\Theta_{\mathbb{P}}^{\log N}(\cdot)$ to denote a two-sided order bound satisfied with high probability and allowing a poly-logarithmic correction factor. That is,
\[
f(N)=\Theta_{\mathbb{P}}^{\log N}(g(N))
\iff
 \exists C >0 : \mathbb{P}\left(
 \frac{g(N)}{\log^{2C} N}
 <
\frac{f(N)}{\log^C N}
<
g(N)
 \right)\rightarrow 1.
\]
We will later use general Landau notation with this superscript or subscript to denote a polylogarithmic correction or bound holding with high probability, respectively.

When $\beta+2\gamma<1$, the largest component in the network $G_N$ has $\Theta_{\mathbb{P}}^{\log N}(N^\gamma)$ vertices -- in particular, there is no giant component. We define the consensus time for the voter model on a disconnected graph as the first hitting time of an absorbing state. 

That is, if $C_1, \ldots, C_\ell \subset [N]$ are the components of $G_N$, the \emph{consensus time} is
\[ \tau_{\rm cons} = \inf\{ t \geq 0 \, : \, \eta_t|_{C_i} \mbox{ is constant for each } i \in [\ell] \} \]
and in the subcritical case we then have the following consensus orders.

\begin{theorem}[ {\cite[Theorem 2.6]{fernley2019voter}} ]\label{thm_subcrit}
Take $\gamma\in[0,\tfrac{1}{2})$ and $0<\beta<1-2\gamma$. 
Then for the discursive voter model on $G_N$ from initial opinions distributed as $\mu_u$ of the Bernoulli process (see Definition \ref{def_bernoulli}) with $u \in (0,1)$, we have
\[
\mathbf{E}^{\theta}_{\mu_u}(\tau_{\text{\textnormal{cons}}}|G_N)=
\Theta^{\log N}_{\mathbb{P}}\left( N^{c} \right)
\text{
where } 
c=
\begin{cases}
 \frac{\gamma}{2-2\gamma}  & \theta \geq \frac{3-4\gamma}{2-2\gamma} , \\
 \gamma(2-\theta) & 1 < \theta < \frac{3-4\gamma}{2-2\gamma} , \\
 \gamma & 2\gamma \leq \theta \leq 1 ,\\
 \frac{\gamma(2-\theta)}{2-2\gamma} & \theta < 2\gamma.
\end{cases}
\]
\end{theorem}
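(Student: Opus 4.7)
The plan is to exploit the duality between the discursive voter model and the coalescing random walks indicated in equation \eqref{eq_dual_dynamic}. Under the subcritical condition $\beta+2\gamma<1$, the graph $G_N$ decomposes into components, each of order $O_{\mathbb{P}}^{\log N}(N^\gamma)$ vertices, so the consensus time equals the maximum over components of the coalescence time inside each. A Bernoulli$(u)$ initial configuration places both opinions inside every non-singleton component with overwhelming probability, so consensus in a fixed component $C$ is comparable, up to polylogarithmic factors, to the full-occupation coalescence time of walkers with rates \eqref{eq_dual_dynamic} restricted to $C$.

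For each component I would reduce the coalescence time to the meeting time of two independent walkers via a standard induction on walker count, and then to a hitting time using Aldous' identity. Since the rates in \eqref{eq_dual_dynamic} are symmetric in $i$ and $j$, the walk restricted to $C$ is reversible with uniform stationary measure, so the expected meeting time from two independent uniform walkers equals the mean hitting time of a uniform vertex up to universal constants. In the subcritical SNR regime, with high probability the largest components have a star-like tree structure with a central high-weight vertex, and these hitting times can be written explicitly in terms of the component volume and the weight profile $\de(v)^\theta$. The polynomial order of the overall consensus time then comes from taking the maximum over components and using that hubs of weight of order $N^\gamma$ correspond to small labels in $[N]$.

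The four regimes in the theorem arise from a competition between the hub bias and the component diameter. The factor $\de(i)^{\theta-1}$ appearing in \eqref{eq_dual_dynamic} speeds up the walker at high-degree vertices when $\theta>1$ and slows it down when $\theta<1$, biasing motion towards or away from each component's hub. When $\theta$ is large the walker collapses to the hub and coalescence is governed by the time to descend the longest spoke, yielding the $\gamma(2-\theta)$ and $\gamma/(2-2\gamma)$ phases, while for $\theta$ in the range $[2\gamma,1]$ the walk is essentially homogeneous on a component of order $N^\gamma$ and the meeting time scales like the volume, giving exponent $\gamma$. For $\theta<2\gamma$ the hubs repel the walker and meetings are delayed by returns from the leaves, yielding $\gamma(2-\theta)/(2-2\gamma)$.

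The main obstacle will be producing matching lower bounds in each regime. Upper bounds on the mean hitting time follow from reversibility and explicit estimates on weighted trees, but for the lower bound one must exhibit, with at least constant probability, two walkers which fail to meet on the relevant time scale. This calls for a coupling of the component's walk with a biased one-dimensional walk on a weighted path encoding the longest spoke, together with a careful treatment of atypical components in the maximum. The non-monotone dependence on $\theta$, with transition points at $\theta=2\gamma$, $\theta=1$ and $\theta=(3-4\gamma)/(2-2\gamma)$, means that each regime has to be analysed separately, with its own balance between time spent at the hub and spoke-traversal time.
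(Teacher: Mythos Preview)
This theorem is not proved in the present paper: it is quoted verbatim from \cite[Theorem 2.6]{fernley2019voter} as background for the subcritical regime, and the current paper contains no proof of it. Consequently there is no ``paper's own proof'' to compare your proposal against; the authors simply cite the result and move on to the supercritical case treated in Theorems \ref{theorem_disc_cons_small} and \ref{theorem_disc_cons_ultrasmall}.

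As to the substance of your sketch, the broad strategy (duality to coalescing walkers, reduction to meeting times on the dominant component, reversibility with uniform stationary measure) is the right framework and is consistent with how the companion paper \cite{fernley2019voter} is described here. However, your structural picture of the subcritical components is too coarse to yield the stated exponents. The text following Theorem \ref{thm_subcrit} says explicitly that the result is explained by components of order $\Theta_{\mathbb{P}}^{\log N}(N^\gamma)$ \emph{or} $\Theta_{\mathbb{P}}^{\log N}(N^{\gamma/(2-2\gamma)})$ in the different $\theta$-regimes. So the dominant component for consensus is not always the largest one: in some regimes a smaller but differently-shaped component sets the timescale. Your plan treats only components of order $N^\gamma$ with a single hub, which cannot produce the exponent $\gamma/(2-2\gamma)$ in the top regime or explain why the transition at $\theta=(3-4\gamma)/(2-2\gamma)$ occurs. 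To recover the full statement you would need to identify, for each $\theta$, which component geometry maximises the coalescence time, and that requires a finer classification of subcritical component shapes than ``star-like tree with one hub''.
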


This result is explained by components of order $\Theta_{\mathbb{P}}^{\log N}(N^\gamma)$ or $\Theta_{\mathbb{P}}^{\log N}(N^{\nicefrac{\gamma}{2-2\gamma}})$ in the different regimes above. 
Note that the subcritical Erd\H{o}s-R\'enyi case, somewhat special due to light tail of the empirical degree distribution and included in Theorem \ref{thm_subcrit} as parameter region $\{\beta<1, \gamma=0\}$, can be seen to have $\Theta_{\p}(\log^3 N)$ consensus times for any $\theta$. 

Instead, in the giant component regime, all components apart from the giant have \emph{maximally} size $O_{\mathbb{P}}^{\log N}(1)$. 
Hence, in this article we will find orders for $\tau_{\text{\textnormal{cons}}}$ which are always produced by the consensus time of a unique giant component of order $\Theta_{\mathbb{P}}^{\log N}(N)$. 

\subsection{Main results}

Of our three main theorems, the first covers the ``small world'' parameters $\gamma < \nicefrac{1}{2}$, including the Erd\H{o}s-R\'enyi graph $\gamma=0$, where typical distances in the network grow logarithmically. In this $\gamma < \nicefrac{1}{2}$ case, the network $G_N$ can in fact be seen by \cite[Theorem 6.18]{van2016random} to be equivalent to a range of equivalent network definitions. In particular, the theorem also applies to the Chung-Lu network with edge probabilities $p_{ij}=1 \wedge\left( \beta N^{2\gamma-1}i^{-\gamma}j^{-\gamma} \right)$.

\begin{definition}[$\mu_u$]\label{def_bernoulli}
The Bernoulli process in $\{0,1\}^{[N]}$ with parameter $u \in [0,1]$ has measure $\mu_u$ such that at each $v \in [N]$ is an independent $\operatorname{Bernoulli}(u)$ random variable.
\end{definition}

\begin{theorem}\label{theorem_disc_cons_small}
Take $\gamma\in \left[0,\tfrac{1}{2}\right]$ and $\beta >1-2\gamma$. Then for the discursive voter model on $G_N$ from initial opinions distributed as $\mu_u$ of the Bernoulli process with $u \in (0,1)$, we have
\[
\mathbb{E}^{\theta}_{\mu_u}(\tau_{\text{\textnormal{cons}}}|G_N)=
\Theta_{\mathbb{P}}^{\log N}\left( N \right)
\]
whenever $\theta \in (-\infty,2]$.
\end{theorem}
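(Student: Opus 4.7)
The plan is to use the duality \eqref{eq_dual_dynamic} between the discursive voter model and the system of coalescing walkers with symmetric edge weights $w_{ij} = (\de(i)^{\theta-1}+\de(j)^{\theta-1})/2$, and to prove matching upper and lower bounds of order $N$ on the expected consensus time up to polylogarithmic factors. In the supercritical regime $\beta > 1-2\gamma$, the non-giant components of $G_N$ are of size $O_{\mathbb{P}}^{\log N}(1)$ and reach consensus in constant time with high probability, so the analysis reduces to the unique giant component $C_{\max}$ of size $\Theta_{\mathbb{P}}^{\log N}(N)$. The symmetry $w_{ij}=w_{ji}$ gives the single walker a uniform invariant measure on $C_{\max}$.

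For the upper bound, I would apply an Oliveira/Aldous--Fill type inequality
\[
\mathbb{E}[\tau_{\rm coal}] \leq C\log N \cdot \max_{x,y \in C_{\max}} \mathbb{E}[\tau_{\rm meet}(x,y)]
\]
and bound the pairwise meeting time by decomposing into a polylogarithmic mixing phase, followed by a geometric number of independent mixing intervals, each producing a coincident pair with probability $\Theta(1/N)$ by the uniformity of the post-mixing law. This yields $\mathbb{E}[\tau_{\rm cons}] \leq \mathbb{E}[\tau_{\rm coal}] = O_{\mathbb{P}}^{\log N}(N)$.

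For the lower bound, I would use the martingale $V_t = \sum_{v \in C_{\max}}\eta_t(v) - u|C_{\max}|$ (the number of $1$-opinions on the giant being preserved in mean, as noted before the definition of $G_N$). By optional stopping at $\tau_{\rm cons}$, one obtains $\mathbb{E}[V_{\tau_{\rm cons}}^2]-\mathbb{E}[V_0^2] = \mathbb{E}[\langle V\rangle_{\tau_{\rm cons}}]$, where the left side is of order $u(1-u)|C_{\max}|^2$. The quadratic variation of $V$ increases at instantaneous rate at most $2\sum_{\{i,j\}\in E}w_{ij} = \sum_i \de(i)^\theta$, which is $O_{\mathbb{P}}^{\log N}(N)$ provided $\theta \leq 2$ and $\gamma \leq 1/2$ (the $\theta$-th moment of the degree law being finite for $\theta<1/\gamma$, with at worst polylog correction at equality). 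Dividing yields $\mathbb{E}[\tau_{\rm cons}] = \Omega_{\mathbb{P}}^{\log N}(N)$, matching the upper bound.

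The main obstacle will be rigorously establishing polylogarithmic mixing of the temperature-$\theta$ walk on $C_{\max}$, uniformly in $\theta \in (-\infty, 2]$. For very negative $\theta$, high-degree vertices act as slow traps (site exit rate $\sim \de^{\theta}$ is small), while for $\theta$ near $2$ they dominate the outgoing transition rates; both regimes threaten rapid mixing and the clean collision count underlying the upper bound. The intended strategy, as hinted in the abstract, is to leverage the fast mixing of an Erd\H{o}s--R\'enyi-like subgraph of typical low-degree vertices inside $C_{\max}$ as a backbone (available for $\gamma \leq 1/2$), and to transfer to the weighted discursive walk via Dirichlet form comparison, combined with a hub-removal or local-time argument to absorb the sojourns at the few high-degree vertices.
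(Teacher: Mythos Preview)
Your lower bound is fine: the martingale/quadratic-variation argument is essentially a direct version of the meeting-time bound of Cox--Durrett--Perkins that the paper quotes as Proposition~\ref{lower_meeting_bound}, and both reduce to the same estimate $\sum_{v}\de(v)^\theta=O_{\mathbb{P}}^{\log N}(N)$ when $\gamma\theta\le 1$.

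The upper bound, however, has a genuine gap. You stake everything on polylogarithmic mixing of the $\theta$-walk on the giant, and propose to obtain it from an Erd\H{o}s--R\'enyi backbone plus Dirichlet comparison. But Theorem~\ref{theorem_disc_cons_small} is stated for the full supercritical range $\beta>1-2\gamma$; when $\gamma<\tfrac12$ this includes values $\beta\le 1$ for which there is \emph{no} supercritical Erd\H{o}s--R\'enyi subgraph to serve as a backbone, so the strategy you sketch does not get off the ground in that range. Even when $\beta>1$, a Dirichlet comparison for large negative $\theta$ is delicate: the site rate at a vertex of degree $d$ is $d^{\theta}$, which is polynomially small at the hubs, and you would need a hub-removal argument whose details are nontrivial. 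Your ``geometric number of mixing intervals'' picture also tacitly assumes that the walker escapes any local trap within one mixing time, which is exactly what is in question.

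The paper sidesteps mixing entirely in this regime. Since $\gamma\le\tfrac12$ forces $\theta\le 2\le 1/\gamma$, the paper builds, for every pair of vertices in the giant, a path of length $O_{\mathbb{P}}(\log N)$ that passes through at most two ``high-weight'' vertices and otherwise stays in a set where degrees are $O_{\mathbb{P}}^{\log N}(1)$ (Lemma~\ref{mypaths} together with Theorem~\ref{my_diameter_bound}). On such a path every edge has conductance $\pi(v)\tfrac{\de(v)^{\theta-1}+\de(w)^{\theta-1}}{2}=\Omega_{\mathbb{P}}^{\log N}(1/N)$ uniformly in $\theta$, because the degrees involved are polylogarithmic. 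The commute-time/resistance bound of Proposition~\ref{prop:max_resistance} then gives $t_{\rm hit}=O_{\mathbb{P}}^{\log N}(N)$, hence $t_{\rm meet}$ and (via Proposition~\ref{prop:coal}) $t_{\rm coal}$ are of the same order. This path argument works for every $\beta>(1-2\gamma)\vee 0$ and every $\theta\in(-\infty,2]$, which is why the paper needs the mixing theorem only for the superlinear phase of Theorem~\ref{theorem_disc_cons_ultrasmall}.
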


Note that the condition $\beta >1-2\gamma$ is the full supercritical regime in the sense that it gives a $\Theta_{\mathbb{P}}(N)$ component, and the largest component is $o_{\mathbb{P}}(N)$ whenever this condition fails. Apart from the critical line $\{\beta+2\gamma=1\}$, this result completes the picture of Theorem \ref{thm_subcrit} over all positive $\beta$ and  $\theta \in (-\infty,2]$.

Our second theorem is instead on the ``ultrasmall world'' parameters $\gamma > \nicefrac{1}{2}$, where typical distances are expected to be doubly logarithmic, and here we see a second phase of superlinear consensus.

\begin{theorem}\label{theorem_disc_cons_ultrasmall}
Take $\gamma\in \left(\tfrac{1}{2},1\right)$ and $\beta > 2\log 2$. Then for the discursive voter model on $G_N$ from initial opinions distributed as $\mu_u$ of the Bernoulli process with $u \in (0,1)$, we have
\[
\mathbb{E}^{\theta}_{\mu_u}(\tau_{\text{\textnormal{cons}}}|G_N)=
\begin{cases}
\Theta_{\mathbb{P}}^{\log N}\left( N \right), &\theta \in \left(-\infty, \frac{1}{\gamma}\right],\\
\Theta_{\mathbb{P}}^{\log N}\left( N^{2-\gamma \theta}\right), &\theta \in \left(\frac{1}{\gamma}, 2\right].
\end{cases}
\]
\end{theorem}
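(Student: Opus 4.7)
The plan is to exploit the duality $\tau_{\text{cons}}^{[N]} = \tau_{\text{coal}}^{\mathbbm{1}_{[N]}}$ between the voter model and the system of coalescing walkers with rates \eqref{eq_dual_dynamic}, and reduce the problem to a meeting-time computation for two walkers at stationarity on the giant component. Since the rates in \eqref{eq_dual_dynamic} are symmetric in $i,j$, the stationary distribution on each component is uniform, and under the hypotheses $\gamma \in (\tfrac12,1)$, $\beta \geq 2$ the graph $G_N$ has a unique giant of size $\Theta^{\log N}_{\mathbb{P}}(N)$ with all other components of size $O^{\log N}_{\mathbb{P}}(1)$. Hence the mean consensus time is controlled, up to the polylog factors implicit in $\Theta^{\log N}_{\mathbb{P}}$, by the meeting time of two dual walkers on this giant.

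The first technical ingredient is polylogarithmic mixing of the dual walk to the uniform stationary law. I would obtain this by isolating an Erd\H{o}s--R\'enyi backbone inside the giant, taking the vertices of moderate rank $i \in (cN,N]$ for a small constant $c = c(\beta) > 0$: when $\beta \geq 2$ the edge probabilities $p_{ij}$ on this set are uniformly bounded below by $(1-c)\beta/N$ up to constants, giving an induced Erd\H{o}s--R\'enyi-type subgraph in the supercritical regime with a standard polylog mixing-time bound for its simple random walk. A Dirichlet-form comparison / canonical-paths argument then transfers this to dual-walk mixing of order $O^{\log N}_{\mathbb{P}}(\operatorname{polylog} N)$ on the whole giant.

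With mixing in hand, a direct calculation from \eqref{eq_dual_dynamic} yields the meeting rate of two independent walkers at the uniform distribution as
\[
q_{\text{meet}} \;=\; \frac{2}{N^2}\sum_{v=1}^N \de(v)^\theta ,
\]
so that meeting, coalescence and consensus times are all of order $1/q_{\text{meet}}$ up to polylog factors. The expected degrees in $G_N$ obey $\mathbb{E}[\de(i)] \asymp N^\gamma i^{-\gamma}$, and so with Poisson tail and concentration estimates near the hub region,
\[
\sum_{v=1}^N \de(v)^\theta \;\asymp\; N^{\gamma\theta}\sum_{i=1}^N i^{-\gamma\theta} \;\asymp\;
\begin{cases}
N, & \gamma\theta \leq 1 ,\\
N^{\gamma\theta}, & \gamma\theta > 1,
\end{cases}
\]
with the transition $\gamma\theta = 1$ matching exactly the threshold $\theta = 1/\gamma$ in the theorem. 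Substituting gives the upper bounds $\Theta^{\log N}_{\mathbb{P}}(N)$ and $\Theta^{\log N}_{\mathbb{P}}(N^{2-\gamma\theta})$ respectively. A matching lower bound follows from duality: taking $v,w$ uniform in the giant,
\[
\mathbb{E}^\theta_{\mu_u}(\tau_{\text{cons}}) \;\geq\; 2u(1-u)\,\mathbb{E}\!\left[\tau_{\text{meet}}(v,w)\right],
\]
since consensus requires the dual walkers from $v,w$ to coalesce whenever they disagreed initially, and after mixing $\mathbb{E}[\tau_{\text{meet}}(v,w)] \asymp 1/q_{\text{meet}}$.

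The main obstacle is the mixing/concentration argument when $\theta > 1/\gamma$. In this regime the dual walk has vastly disproportionate out-rates $\de(v)^\theta$ at the hubs, while $\mathbb{E}[D^\theta] = \infty$ for the limiting degree distribution. One must show both that the dual chain still mixes on a polylog timescale to uniform, and that hub contributions to $\sum_v \de(v)^\theta$ are tightly concentrated up to polylog factors. Carrying out the Dirichlet-form comparison not against the simple random walk on the backbone but against the dual-rate walk restricted there, and truncating the hub effects at a polylogarithmic scale so that both mixing and meeting-rate estimates survive the $\Theta^{\log N}_{\mathbb{P}}$ tolerance, is the most delicate point of the proof.
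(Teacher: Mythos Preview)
Your overall strategy---duality, uniform stationarity, reduction to two-walker meeting on the giant, and the heuristic $t_{\rm meet}\asymp N^2/\sum_v \de(v)^\theta$---matches the paper's. The lower bound is fine: the paper uses exactly the stationary meeting bound (Proposition~\ref{lower_meeting_bound}) which is your $1/q_{\rm meet}$ up to constants. For $\theta\le 1/\gamma$ your upper bound also goes through, though the paper takes a more direct route via commute times along short paths in low-degree vertices (Lemma~\ref{mypaths}, Proposition~\ref{prop:max_resistance}), avoiding any appeal to mixing in that regime.

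The genuine gap is the \emph{upper bound} when $\theta\in(1/\gamma,2]$. You assert that ``with mixing in hand'' the meeting time is of order $1/q_{\rm meet}$, but you give no mechanism for this, and the standard one fails. The usual argument---wait a mixing time, check for coincidence with probability $\sum_v\pi(v)^2=\Theta(1/N)$, repeat---yields only $t_{\rm meet}=O^{\log N}_{\mathbb{P}}(N\cdot t_{\rm mix})=O^{\log N}_{\mathbb{P}}(N)$, never the strictly sublinear $N^{2-\gamma\theta}$. The point is that the target meeting time is $o(N)$ while the coincidence probability at stationarity is $\Theta(1/N)$, so you cannot afford to decorrelate between meeting attempts. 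Your ``direct calculation'' of $q_{\rm meet}$ is a heuristic rate, not a quantity that any general inequality converts into a meeting-time upper bound; the paper explicitly flags this as the hard direction.

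The paper's solution is to \emph{localise} the meeting: it passes to the chain partially observed on $L_1=\{v:\de(v)=1,\ v\sim 1\}$, the leaves of the top hub. On $L_1$ the partially observed dynamics are transparent---every step goes through vertex~$1$---so two walkers on $L_1$ meet at rate $\Theta(\de(1)^{\theta-1}/|L_1|)$. A Chernoff-type occupation bound (Lezaud) against the relaxation time shows that in time $T=\Theta^{\log N}(N^{2-\gamma\theta}\,t_{\rm rel})$ the pair spends at least $\Theta(T\pi(L_1)^2)$ time in $L_1\times L_1$, which is enough to force a meeting there. The mixing input (Theorem~\ref{theorem_erw_mixing}, transferred to all $\theta\ge 1$ by monotonicity of $t_{\rm rel}$ in $\theta$) enters only to control $t_{\rm rel}$ in this occupation estimate. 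So mixing is indeed needed, but it is not the obstacle you identify: concentration of $\sum_v\de(v)^\theta$ and mixing of the dual walk are both routine here; what is missing from your sketch is the device that converts fast hub-driven transitions into a sublinear meeting-time \emph{upper} bound.
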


Here $2\log 2 \approx 1.386$. We can easily provide a heuristic for these orders of coalescence time when all the models have the dual dynamics \eqref{eq_dual_dynamic}, as then from the uniform stationary distribution it is natural to conjecture that two walkers on the giant component meet in $\Theta(N)$ total steps. To approximate the rate of stepping, we simply observe that the ergodic rate is on the order $\nicefrac{\sum \de^\theta}{N}$, again using the uniform stationary distribution. 
The mean-field interactions (where $i$ chooses to interact with any vertex $j \in [N]$ with probability proportional to $j^{-\gamma}$) were considered by \cite{moinet2018generalized} who give the consensus order 
$\nicefrac{N^2}{\sum \de^\theta}$
 which agrees with the polynomial orders in Theorems \ref{theorem_disc_cons_small} and \ref{theorem_disc_cons_ultrasmall}. So, we verify here that the supercritical phase of a rank-one network $G_N$ (rank-one in the multigraph sense of Definition \ref{def_gsnr}) is indeed well approximated by the mean field environment, when it comes to voter model consensus speed.

\begin{remark}[The polylogarithmic factor]
Despite this simple explanation, it is hard to upper bound meeting on an order strictly faster than $N$ as we cannot wait for an $\Omega(1)$ mixing period in between checking for a meeting of two uniform walkers (with probability $O(\nicefrac{1}{N})$). Instead, we get the upper bound by controlling the amount of time walkers spend in the neighbourhood of the highest degree vertices and then use a partially observed \cite[Section 2.7.1]{aldous-fill-2014} version of the chain to find a meeting time.

Results \cite[Theorem 1.3]{oliveira2013mean} and \cite[Proposition 2.5]{cox2016convergence} describe the mean field for the voter model on condition $t_{\rm mix} \ll t^\pi_{\rm meet}$, which we verify in this article when $\theta \geq 1$, with the precise limit shape of Kingman’s coalescent
\[
\sum_{i=2}^\infty \nicefrac{E_i}{\binom{i}{2}},
\quad
\text{where }
E_i \stackrel{\rm i.i.d.}{\sim} \operatorname{Exp}(1).
\]
However, this shape is observed on the timescale $t^\pi_{\rm meet}$, the expected meeting time of two stationary walkers, which still with our methods we can only control up to a polylogarithmic factor.

We follow \cite{durrett2010some} in conjecturing via Aldous' ``Poisson Clumping Heuristic'' \cite{aldous2013probability} that the logarithmic corrections are only an artifact of the proof and the order of the mean consensus time is the exact polynomial order given without any polylogarithmic correction. That is, the same order as the result of \cite{moinet2018generalized} and tight to the lower bound of Proposition \ref{lower_meeting_bound}. 
This heuristic uses Kac's formula \cite[(2.24)]{aldous-fill-2014} for the return time to the ``diagonal'' set $\{(i,i):i \in \sC_{\rm max}\}$. Applied to any voter model with dual chain having stationary distribution $\pi$ and vertex rates $q$, it tells us that for the distribution $\rho \propto q$
\[
t^\rho_{\rm meet}:=
\mathbb{E}_{\rho \otimes \rho} \left( t_{\rm meet} \right)
=
\frac{1-\sum_v \pi^2(v)}{2\sum_v \pi^2(v) q(v)}
\]
which agrees with the polynomial orders of our main theorems. Proving such a result without polylogarithmic corrections would require a more detailed structural understanding of the network to establish accurately the probability of mixing without meeting and so relate $t^\pi_{\rm meet}$ to $t^\rho_{\rm meet}$, for example using \cite[Equation 3.18]{cox2016convergence}.
\end{remark}

The model continues to make sense when $\theta >2$, but note that these are the parameter values where consensus time on a large star graph tends to zero polynomially fast and so not suited to the Chernoff bounds of \cite{lezaud_chernoff} which we otherwise rely on. Scale-free network dynamics are commonly, and certainly for this model, driven by the vertices of largest degree and so Theorems \ref{theorem_disc_cons_small} and \ref{theorem_disc_cons_ultrasmall} cover all parameters $\theta \in \mathbb{R}$ not driven by this degenerate behaviour.

\begin{definition}[VSRW]\label{def_vsrw}
The variable speed random walk on a graph $G=(V,E)$ is defined by generator matrix
\[
\forall i \neq j \quad
Q(i,j)=
\mathbbm{1}_{i \sim j}
\]
\end{definition}

In the ultrasmall case, we need to bound the mixing time (with the standard definition total variation threshold $\nicefrac{1}{e}$, see \eqref{eq_mixing_def} for the precise definition) and so need large enough $\beta$ to have room for the mixing construction in the proof of Theorem \ref{theorem_erw_mixing}. This loose mixing bound gives the polylogarithmic correction accuracy sufficient for Theorem \ref{theorem_disc_cons_ultrasmall}.

\begin{restatable}{theorem}{erwmixing}
\label{theorem_erw_mixing}
Given $\beta \geq 3$, the SNR network of Definition \ref{def_SNR} has VSRW mixing time
\[
t_{\rm mix}=O^{\log N}_{\mathbb{P}}\left( 1 \right).
\]
\end{restatable}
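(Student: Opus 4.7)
The plan is a spectral-gap comparison: lower-bound the spectral gap of the VSRW on the giant of $G_N$ by that of an Erdős-Rényi spanning subgraph with known polylogarithmic VSRW mixing. The key enabling fact is that VSRW on any connected graph is reversible with respect to counting measure, hence has uniform stationary distribution, so the variance term in the Poincaré inequality is unchanged under addition of edges, while the Dirichlet form
\[
\mathcal{E}(f,f)=|V|^{-1}\sum_{\{i,j\}\in E}\left(f(i)-f(j)\right)^2
\]
is pointwise monotone in the edge set. Adding edges therefore can only increase $\lambda$ and decrease the mixing time.

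For the construction, first note that since $i,j\leq N$ and $\gamma\in[0,1)$ we have $p_{ij}\geq 1-\exp(-\beta N^{2\gamma-1}N^{-2\gamma})=1-\exp(-\beta/N)$, so one can couple $G_N$ to contain an independent Erdős-Rényi subgraph $G^{\mathrm{ER}}\sim\mathcal{G}(N,1-e^{-\beta/N})$. For $\beta\geq 3$ this is deeply supercritical, and its giant $\mathcal{G}^{\mathrm{ER}}$ has VSRW mixing time $O^{\log N}_{\mathbb{P}}(1)$ by standard results on random walks on supercritical Erdős-Rényi giants. The giant of $G_N$ contains $\mathcal{G}^{\mathrm{ER}}$ but may also include ``straggler'' vertices lying outside $\mathcal{G}^{\mathrm{ER}}$ and attached to it only via non-ER edges; each such straggler can be attached to $\mathcal{G}^{\mathrm{ER}}$ along a short $G_N$-path, typically a single edge through a hub, yielding a spanning subgraph $\widetilde{G}$ of the $G_N$-giant with polylog VSRW mixing. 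The monotonicity above then gives $\lambda(G_N\text{-giant})\geq\lambda(\widetilde{G})$, and the standard bound $t_{\mathrm{mix}}\leq\lambda^{-1}\log N$ finishes the proof.

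I expect the main obstacle to be the straggler-attachment step: pendant attachments to $\mathcal{G}^{\mathrm{ER}}$ can in principle degrade the spectral gap, so these attachments must be both short and not overly numerous in the worst case. The choice $\beta\geq 3$ — tighter than the threshold $\beta>1-2\gamma$ merely required for a giant to exist — should furnish the needed slack: it forces $\mathcal{G}^{\mathrm{ER}}$ to cover a positive-density fraction of the $G_N$-giant, and makes the remaining straggler attachments short (typically single hub edges) with enough probability that the polylog mixing bound on $\widetilde{G}$ survives the transfer.
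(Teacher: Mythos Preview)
Your high-level strategy matches the paper's: embed a supercritical Erd\H{o}s-R\'enyi subgraph, extend it to a spanning subgraph of the $G_N$-giant, and use the Dirichlet-form monotonicity (uniform stationary distribution for VSRW) to pass from the spanning subgraph to the full giant. The final monotonicity step is exactly right and is how the paper closes.

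The gap is that your ``straggler-attachment step'' is the entire technical content of the proof, and you have not carried it out. Two concrete issues. First, the monotonicity you invoke compares Dirichlet forms on a \emph{fixed} vertex set; attaching stragglers enlarges the vertex set, so you cannot conclude $\lambda(\widetilde{G})\geq\lambda(\mathcal{G}^{\mathrm{ER}})$ this way. You must independently prove that $\widetilde{G}$ has polylog mixing, and a single hub edge per straggler does not do this without further structure: a hub with $N^\gamma$ pendant leaves already has Cheeger constant $O(N^{-\gamma})$ if it touches the ER core through only boundedly many edges. Second, the role of $\beta\geq 3$ is not merely to make $\mathcal{G}^{\mathrm{ER}}$ dense; it is needed (via Assumption~\ref{assumption_large_beta}) to guarantee that after the hubs are absorbed and one further ball is explored, the \emph{remaining} exploration outside this core is a subcritical branching process, so the pendant attachments are trees of size $O(\log^3 N)$ rather than arbitrary.

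The paper resolves this with a layered construction: (i) attach the high-weight set $H_\epsilon$ to the ER giant injectively as single leaves, which at most halves the Cheeger constant; (ii) complete all internal edges on this vertex set (monotonicity now applies and the hubs become genuinely well connected); (iii) explore one more ball and then grow the rest as pendant trees, shown subcritical by the $\beta\geq 3$ assumption; (iv) bound mixing on the resulting spanning subgraph not by Cheeger but by a hitting-time plus partially-observed-chain argument, since the pendant trees and hub neighbourhoods must be escaped in polylog time before the fast core dynamics take over. Your sketch would need all of these ingredients, or substitutes for them, to become a proof.
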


Note that this theorem applies to the full range of power laws $\gamma \in [0,1)$. There are many existing mixing results for scale-free networks which are configuration models (e.g. \cite{zbMATH06093928} who impose a minimum degree of $3$), but the rank one SNR in the ultrasmall regime is not a configuration model.

To obtain this mixing bound, we use that $\beta>1$ to find an Erd\H{o}s-R\'enyi giant component as a subgraph of the giant component of $G_N$. This is known to be fast mixing, and so by growing it to a spanning subgraph of the complete giant by attaching trees of maximal size $O^{\log N}_{\mathbb{P}}\left( 1 \right)$ we produce a structure which can also be seen to also have fast mixing. Once we obtain a fast-mixing spanning subgraph, the symmetric generator is used in applying \cite[Corollary 3.28]{aldous-fill-2014} to argue that completing internal edges of this subgraph can only further accelerate mixing and so we obtain a mixing bound on the \emph{induced} subgraph which is simply the complete giant component.

Obtaining a mixing bound for the VSRW achieves a lot in this work, as the dual walkers \eqref{eq_dual_dynamic} are a family of symmetric dynamics with rates non-decreasing in $\theta$. Hence, by again \cite[Corollary 3.28]{aldous-fill-2014}, the bound of Theorem \ref{theorem_erw_mixing} also applies to every dynamic with $\theta \geq 1$, which are the hard cases of Theorem \ref{theorem_disc_cons_ultrasmall}.

\begin{assumption}\label{assumption_large_beta}
While we state and prove Theorem \ref{theorem_erw_mixing} with simpler condition $\beta \geq 3$, our methods actually apply to any $\beta, \gamma$ satisfying 
\[
\rho >\frac{1}{2}
\text{ and }
\int_0^1
x^{-2\gamma}
(1-\rho)^{\left( \frac{1}{1-\gamma}-1+\gamma \right) x^{-\gamma}}
{\rm d} x<1
\]
for the unique $\rho = \rho(\beta) \in (0,1)$ satisfying $1-\rho=e^{-\beta \rho}$.
\end{assumption}

\begin{remark}[Smaller supercritical $\beta$]
We use the Erd\H{o}s-R\'enyi subgraph in the SNR network to control mixing, and so any modification of our approach would still require at least $ \beta > 1 $ while the giant component regime for this network is really $\{ \beta + 2 \gamma > 1 \}$.

When $\gamma \leq \nicefrac{1}{\theta}$ our results on the consensus time do not need mixing control and in fact apply to all $\{\beta>(1-2\gamma)\vee 0\}$, and at larger $\gamma$ we require the technical assumption \ref{assumption_large_beta}. These larger $\gamma$ are at least $\gamma\geq\nicefrac{1}{2}$ and we show in Proposition \ref{prop_beta_2log} that there the integral condition doesn't play a role. Hence we can state Theorem \ref{theorem_disc_cons_ultrasmall} with just condition $\rho>\nicefrac{1}{2}$ or equivalently $\beta>2\log 2$. 
To control mixing when $\gamma\leq \nicefrac{1}{2}$, however, we do need the integral condition 
and in Proposition \ref{prop_beta_3} we show that for that $\beta>2.17$ is sufficient. 

To show our results for the parameter values with $\beta+2\gamma>1$ but without the conditions of Assumption \ref{assumption_large_beta}, we would need a bound on the mixing time for the VSRW on a graph with no Erd\H{o}s-R\'enyi subgraph.
 Still, it is widely believed that the network would continue to be fast-mixing through the whole supercritical region, and therefore we can fairly confidently conjecture that the exponents we prove for the region $\{ \beta \geq 2\log 2 \}$ wouldn't change for all smaller $\beta$ in the region $\{\gamma>\nicefrac{1}{2}\}$.
\end{remark}

\begin{figure}[h]
\centering
\includegraphics[scale=0.9]{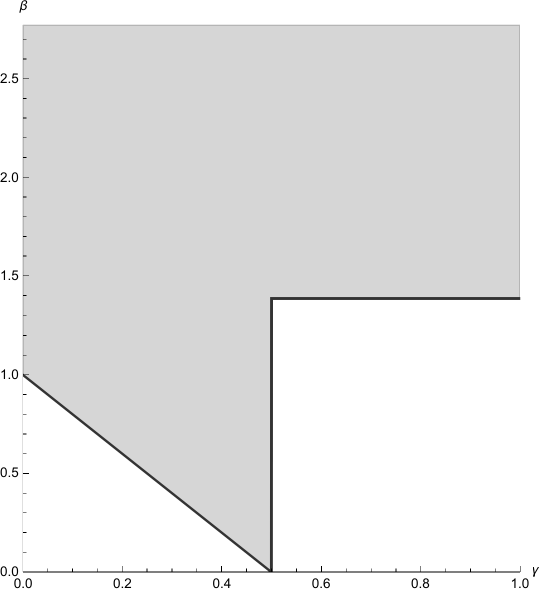}
\caption{
We prove Theorems \ref{theorem_disc_cons_small} and \ref{theorem_disc_cons_ultrasmall} in the highlighted region (when $\gamma\geq\nicefrac{1}{2}$ requiring Assumption \ref{assumption_large_beta}, the integral condition of which we cannot see in this plot), and conjecture the same exponents would be seen where $\{\gamma>\nicefrac{1}{2},\beta \leq 2\log 2\}$. In the other hole $\{\beta+2\gamma<1\}$, we see a very different subcritical graph structure and so the different consensus time exponents of Theorem \ref{thm_subcrit}. }\label{fig:parameter_regions}
\end{figure}

\section{Paths in the Network}

While it is perfectly possible to define Poissonian inhomogenous random graphs that are not rank one, we will mostly be interested in \emph{rank one} Norros-Reittu graphs as in the original definition of \cite{norros2006conditionally}.

\begin{definition}[General Multigraph Norros-Reittu]\label{def_mnr}
We parametrise Multigraph Norros-Reittu (MNR) by a vertex weight profile
\[
\operatorname{f}: (0,1] \rightarrow (0,\infty),
\]
and then the multigraph has independently
\[
\operatorname{Pois}\left( \frac{1}{N} \operatorname{f}\left( \frac{i}{N} \right) \operatorname{f}\left( \frac{j}{N} \right)\right)
\]
edges between each pair $i,j \in [N]$, including pairs with $i=j$. 
\end{definition}

The Simplified Norros-Reittu (SNR) model is obtained from the multigraph model by \emph{flattening}, i.e. removing loops (edges only incident to one vertex), and then reducing all edge counts to $1$ if they are greater.

\begin{definition}[General Simplified Norros-Reittu]\label{def_gsnr}
Given the same $\operatorname{f}$ of Definition \ref{def_mnr}, we constuct the Simplified Norros-Reittu network by inserting an edge independently between each $i \in [N]$ and $j \in [N]\setminus\{i\}$ with probability
\[
1-\exp\left( - \frac{1}{N} \operatorname{f}\left( \frac{i}{N} \right) \operatorname{f}\left( \frac{j}{N} \right)\right).
\]
\end{definition}

For example, the kernel defining our main network of interest in Definition \ref{def_SNR} is
\[
\operatorname{f}(x)=\sqrt{\beta}x^{-\gamma}.
\]

When $\gamma<\nicefrac{1}{2}$ we can bound the network diameter by coupling to a uniform random graph and using results in \cite{fernholz2007diameter}. However, we also require a bound for $\gamma\geq\nicefrac{1}{2}$ which, to our knowledge, does not exist in the literature.

Rather than redevelop the theory, we will obtain this bound quicker by repeatedly applying the super- and sub-critical diameter theorems in \cite{bollobas2007phase}.

\begin{theorem}\label{my_diameter_bound}
The rank one NR network with any weight function which is supercritical
\[\int_0^1 \operatorname{f}^2 \in (1,\infty ] \]
(see Proposition \ref{prop_rankone_snr_giant}) and bounded below
\[
\min \operatorname{f} >0
\]
has componentwise diameter $O_{\mathbb{P}}(\log N)$.
\end{theorem}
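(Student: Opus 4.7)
The plan is to prove the bound by truncating $\operatorname{f}$ to a bounded kernel so that the super- and sub-critical diameter theorems of \cite{bollobas2007phase} apply directly, and then to bootstrap back to $\operatorname{f}$ using the bounded-below hypothesis $\min\operatorname{f}>0$.

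First, I would choose a truncation level $M$ large enough that $\int_0^1(\operatorname{f}\wedge M)^2>1$, which is possible by monotone convergence from $\int\operatorname{f}^2\in(1,\infty]$. Writing $\operatorname{g}:=\operatorname{f}\wedge M$, the kernel $\operatorname{g}$ is bounded above by $M$, bounded below by $c:=\min\operatorname{f}>0$, and supercritical. Since the edge probabilities in Definition \ref{def_gsnr} are monotone in the weight function, the $\operatorname{g}$-SNR graph is a spanning subgraph of the $\operatorname{f}$-SNR graph. Applying the supercritical diameter theorem of \cite{bollobas2007phase} to the $\operatorname{g}$-graph gives a giant component $\mathcal{C}_{\operatorname{g}}$ of size $\Theta_{\mathbb{P}}(N)$ with $\diam(\mathcal{C}_{\operatorname{g}})=O_{\mathbb{P}}(\log N)$, while applying the subcritical diameter theorem to the dual subcritical kernel describing the non-giant part of the $\operatorname{g}$-graph gives every non-giant $\operatorname{g}$-component size, and hence diameter, $O_{\mathbb{P}}(\log N)$.

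Next, I would bootstrap from $\operatorname{g}$ to $\operatorname{f}$ using $\min\operatorname{f}>0$. Any $\operatorname{f}$-edge absent from the $\operatorname{g}$-graph has at least one endpoint in $V_H:=\{v\in[N]:\operatorname{f}(v/N)>M\}$. For each $v\in V_H$, the expected number of $\operatorname{g}$-edges from $v$ into $\mathcal{C}_{\operatorname{g}}$ is at least of order $Mc|\mathcal{C}_{\operatorname{g}}|/N\sim Mc\rho$ for a positive constant $\rho$, so by a Chernoff bound union-bounded over $V_H$ we get $d_{\operatorname{g}}(v,\mathcal{C}_{\operatorname{g}})\leq 1$ for every $v\in V_H$ whp. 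This forces $V_H$ into the $\operatorname{f}$-component $\mathcal{C}^\star$ of $\mathcal{C}_{\operatorname{g}}$, so that any other $\operatorname{f}$-component sits entirely in $[N]\setminus V_H$ where the two graphs agree and is therefore a non-giant $\operatorname{g}$-component of diameter $O(\log N)$. For $\mathcal{C}^\star$ itself, any $v\in\mathcal{C}^\star$ either lies in $\mathcal{C}_{\operatorname{g}}$ or in a non-giant $\operatorname{g}$-component $\mathcal{C}'$ absorbed into $\mathcal{C}^\star$ by some extra edge $(u,w)$ with $u\in\mathcal{C}'$ and $w\in V_H$; since $d_{\operatorname{g}}(w,\mathcal{C}_{\operatorname{g}})\leq 1$, this gives $d_{\operatorname{f}}(v,\mathcal{C}_{\operatorname{g}})\leq\diam(\mathcal{C}')+2=O(\log N)$, and the triangle inequality yields $\diam(\mathcal{C}^\star)=O_{\mathbb{P}}(\log N)$.

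The hard part will be the Chernoff step over $V_H$: for a fixed $M$, the attachment probability of a single borderline $v\in V_H$ of weight just above $M$ is merely a non-zero constant $1-\exp(-Mc\rho)$ rather than $1-o(1)$, so the naive union bound fails whenever $|V_H|$ is linear in $N$. I would fix this either by allowing $M=M(N)\to\infty$ slowly (say $M=\log\log N$, for which the bounded-kernel diameter theorems of \cite{bollobas2007phase} still apply with uniform constants), or by stratifying $V_H$ into dyadic weight classes $\{M\cdot 2^k\leq\operatorname{f}(v/N)<M\cdot 2^{k+1}\}$ on which the expected degree into $\mathcal{C}_{\operatorname{g}}$ is exponentially large, making a class-wise Chernoff bound summable across both classes and vertices. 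The bounded-below hypothesis $\min\operatorname{f}>0$ enters precisely here, through the guarantee that $\sum_{u\in\mathcal{C}_{\operatorname{g}}}\operatorname{g}(u/N)/N$ is bounded below by a positive constant, ensuring every $V_H$ vertex sees enough mass in $\mathcal{C}_{\operatorname{g}}$ to attach.
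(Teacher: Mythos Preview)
Your overall strategy---truncate, apply \cite{bollobas2007phase}, bootstrap back---matches the paper's, but the bootstrap step has a genuine gap that neither of your proposed fixes closes. A minor point first: \cite[Theorem~3.16]{bollobas2007phase} is for kernels with \emph{finitely many types}, so the paper truncates to a step function $\operatorname{f}_M(x):=\operatorname{f}(\lceil Mx\rceil/M)$ rather than to $\operatorname{f}\wedge M$. The real problem is the attachment of $V_H$. Your dyadic stratification still leaves the bottom class $\{M\le \operatorname{f}(v/N)<2M\}$ with possibly $\Theta(N)$ vertices, each failing to attach with probability $\exp(-\Theta(M))$; summability therefore forces $M\gtrsim\log N$, not $\log\log N$. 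Your $M(N)\to\infty$ fix then requires a version of the diameter theorem uniform in the (now $N$-dependent) kernel, which is not available. Since the rest of your argument (that every other $\operatorname{f}$-component lies in $[N]\setminus V_H$, and that $d_{\operatorname{g}}(w,\mathcal{C}_{\operatorname{g}})\le1$ for $w\in V_H$) hinges on this failed step, the whole bootstrap collapses.

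The paper circumvents the union bound entirely by a reservation trick. It writes $\operatorname{f}_M=(1-\epsilon)\operatorname{f}_M+\epsilon\operatorname{f}_M$, builds the finite-type supercritical giant from the $(1-\epsilon)$-part alone, and \emph{holds back} the $\epsilon$-part. After realising all remaining edges, the reserved ``dark magenta'' mass gives \emph{every} vertex---and here is exactly where $\min\operatorname{f}>0$ enters---an independent $\Theta(1)$ probability of a direct edge into that giant. The conclusion is then not by union bound but by a percolation lemma (Lemma~\ref{percolation_lemma} in the paper): if each vertex of an arbitrary graph is independently marked with probability $p>0$, then with high probability every vertex is within graph distance $O(\log N)$ of a marked vertex or else lies in a component of diameter $O(\log N)$. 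This edge-mass reservation plus the percolation lemma is the idea you are missing.
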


\begin{proof}
We can assume that $f$ is a nonincreasing function without loss of generality because the purpose of the function $f$ is to define a weight measure on $(0,\infty)$ and so can always obtain a nonincreasing equivalent by ``reordering'', replacing $f$ with the weight function 
$
x
\mapsto
\min \left\{
C>0
:
\left|\{
f>C
\}\right|\leq x
\right\}
$.

Then the core of the argument is that we will lower bound the network model with a supercritical network that has finitely many types.
Define for $M \in \mathbb{N}^+$
\[
\operatorname{f}_M(x):=\operatorname{f}\left( \frac{\lceil Mx \rceil}{M} \right)
\]
so that we have stochastic domination of the edges in the rank one networks written $\mathcal{N}\operatorname{f}_M \preceq \mathcal{N}\operatorname{f}$,
where $\mathcal{N}g$ denotes the NR rank one network with weight function $g$ determining the edge means before flattening.

Note also that because $\operatorname{f}_M$ is bounded, from \cite[Theorem 6.18]{van2016random} we know that the simplified Norros-Reittu $\mathcal{N}\operatorname{f}_M$ and the Chung-Lu graph with kernel $\operatorname{f}_M$ are \emph{asymptotically equivalent}, that is that we can couple the entire graphs with high probability.

Hence we consider the norm of the Chung-Lu kernel which is given by \cite[Equation 16.8]{bollobas2007phase}.
As $M \rightarrow \infty$, using that $f$ in nonincreasing,
\[
\int_{M^{-1}}^1 \operatorname{f}_M^2= \frac{1}{M} \sum_{k=2}^M \operatorname{f}\left( \frac{k}{M} \right)^2 \rightarrow \int_0^1 \operatorname{f}^2 >1
\]
so that for $M$ large enough the \emph{truncated} model is also a supercritical rank one network, with bounded expected degree and finitely many types. In fact, if we take
\[
0<\epsilon < 1-\frac{1}{\int_0^1 \operatorname{f}^2}
\]
then $\sqrt{1-\epsilon}\operatorname{f}_M$ is still asymptotically supercritical. Colour the mass of the measure with density $\operatorname{f}_M$ such that a measure with density $\sqrt{1-\epsilon} \operatorname{f}_M$ is blue and the difference measure is red:
\[
\operatorname{f}_M = 
\mathunderline{blue}{ \sqrt{1-\epsilon} \operatorname{f}_M } 
+ \mathunderline{red}{ (1-\sqrt{1-\epsilon}) \operatorname{f}_M}.
\]

\begin{figure}[htp]
\centering
\includegraphics[width=10cm]{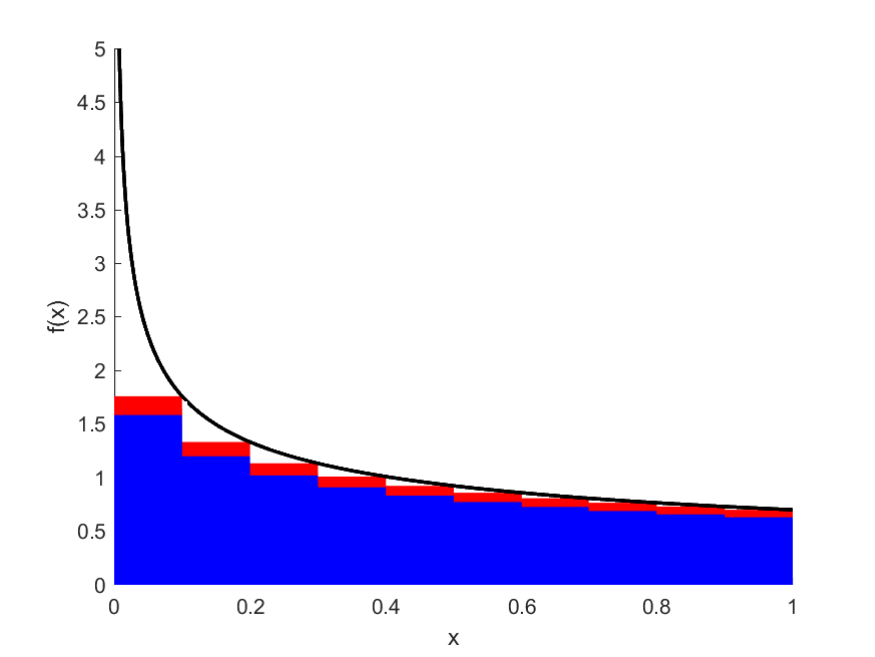}
\caption{We divide the degree mass and label by red or blue, for a typical rank one scale-free network with the step count $M=10$. Note there is some uncoloured mass.}\label{fig:area}
\end{figure}

The convenience of labelling mass by primary colours is that we can colour \emph{edges} \myul[blue]{blue} if they go from blue mass to blue mass and then \myul[magenta]{magenta} if they go from blue mass to red mass, et cetera. The idea behind such a picture is of a Norros-Reittu network where edges of differing colours arrive independently, and so we work with multigraph NR in the proof - before flattening, they are indeed independent.

Further all rank one kernels are trivially irreducible when restricted to the support of their weight function. We apply theorems in \cite{bollobas2007phase} (which are stated for the equivalent Chung-Lu version) to the network of only blue edges:

\begin{itemize}
\item By \cite[Theorem 3.1]{bollobas2007phase} (because the kernel is supercritical and irreducible) it has a giant component.
\item Because the network model has finitely many vertex types, by \cite[Theorem 3.16]{bollobas2007phase}, the componentwise diameter of this structure is $\Theta_{\mathbb{P}}(\log N)$.
\end{itemize}

So this subgraph of blue edges has a giant component whp, and it must be nested whp in the giant component of the edges of all colours.

After realising just the blue edges, we colour vertices in \myul{black} if they are in the largest component of the blue edge subgraph.
We now put aside the \myul[Plum]{dark magenta} edge mass, i.e. Poisson mass for \myul[magenta]{magenta} edges which would feature at least one black vertex.

Realise all the other Poisson edges in $\mathcal{N}\operatorname{f}_M$ from mass that is not \myul[blue]{blue} or \myul[Plum]{dark magenta}. These edges are either \myul[red]{red} or \myul[CarnationPink]{light magenta}. Thinking of this as the NR multigraph, these are all independent -- and questions of the diameter are the same on the multigraph or flattened version.

We finally realise the \myul[Plum]{dark magenta} edges, i.e. those \myul[magenta]{magenta} edges coming from a black vertex. Any vertex in $[N]$ and a particular black vertex are connected by a \myul[Plum]{dark magenta} edge with probability at least
\[
p_M:=
1-\exp\left(
-\frac{1}{N}(1-\sqrt{1-\epsilon}) \sqrt{1-\epsilon} \operatorname{f}(1)^2
\right)
=
\Omega\left( \frac{1}{N} \right)
\]
and so thus, after realising the number of black vertices at size $\Theta_{\mathbb{P}}(N)$, we can see vertices are incident to \myul[Plum]{dark magenta} edges with probability $\Theta_{\mathbb{P}}(1)$, and their incidence is entirely independent by the independence of edges for the multigraph NR network. 

Applying Lemma \ref{percolation_lemma}, we can say that any vertex on the graph is either on a component of small diameter or within maximal distance $O_{\mathbb{P}}(\log N)$ of a vertex of distance $1$ from the \myul[blue]{blue} giant. The \myul[blue]{blue} giant had logarithmic diameter and so any two connected vertices in $\mathcal{N}\operatorname{f}$ are of distance at most
\[
O_{\mathbb{P}}(\log N)+1+O_{\mathbb{P}}(\log N)+1+O_{\mathbb{P}}(\log N)=O_{\mathbb{P}}(\log N)
\]
by communicating through $\mathcal{N}\operatorname{f}_M$, and we have the result.
\end{proof}

\begin{lemma}\label{mypaths}
For the network $G_N$ of Definition \ref{def_SNR} with high probability, when $\beta>(1-2\gamma)\vee 0$, every pair of two vertices in the set
\[
S:=\left[ \left\lfloor
N
\log^{-\alpha}N
\right\rfloor \right],
\]
where $\alpha>\nicefrac{1}{\gamma}$, is simultaneously connected by a path of $O_{\mathbb{P}}(\log N)$ bounded weight vertices, in
\[
L_\epsilon:=[N] \setminus [\epsilon N],
\]
for some $\epsilon \in (0,1)$ sufficiently small.
\end{lemma}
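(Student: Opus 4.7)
The plan is to route paths between hubs through a bounded-weight backbone living inside $L_\epsilon$. Observe that for any $\epsilon>0$ we have $N\log^{-\alpha}N < \epsilon N$ for all large $N$, so $S \cap L_\epsilon = \emptyset$, meaning any path from $v\in S$ to $w\in S$ passing through $L_\epsilon$ is genuinely bounded-weight apart from its two endpoints. I want to show that the induced subgraph on $L_\epsilon$ contains a giant component $\mathcal{G}_\epsilon$ of logarithmic diameter, and that every hub $v \in S$ has a direct edge into $\mathcal{G}_\epsilon$.

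For the backbone: the induced SNR subgraph on $L_\epsilon$ is, after relabelling its $(1-\epsilon+o(1))N$ vertices, a rank-one NR network with weight function
\[
g_\epsilon(x) = \sqrt{\beta(1-\epsilon)}\,\bigl(\epsilon + (1-\epsilon)x\bigr)^{-\gamma},
\]
bounded above by $\sqrt{\beta(1-\epsilon)}\epsilon^{-\gamma}$ and below by $\sqrt{\beta(1-\epsilon)}$. Its $L^2$ norm evaluates to $\beta\int_\epsilon^1 y^{-2\gamma}\,{\rm d}y$, which tends to $\beta/(1-2\gamma)>1$ as $\epsilon\downarrow 0$ when $\gamma<\tfrac12$ (using $\beta > 1-2\gamma$), and diverges outright when $\gamma\geq\tfrac12$. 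Choosing $\epsilon$ small enough that this integral exceeds $1$, Theorem \ref{my_diameter_bound} gives the $L_\epsilon$ subgraph componentwise diameter $O_\mathbb{P}(\log N)$, so its unique giant $\mathcal{G}_\epsilon \subset L_\epsilon$ has $\Theta_\mathbb{P}(N)$ vertices and diameter $O_\mathbb{P}(\log N)$.

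For the hub attachments: by the Norros-Reittu independent-edges property, the edges from any $v \in S$ to $L_\epsilon$ are independent of the edge structure inside $L_\epsilon$. For $v \leq N\log^{-\alpha}N$ and $u \leq N$, we have $\beta N^{2\gamma-1}v^{-\gamma}u^{-\gamma} \geq \beta N^{-1}\log^{\alpha\gamma}N$, so each such edge is present with probability at least $cN^{-1}\log^{\alpha\gamma}N$ for some $c>0$. Conditioning on $|\mathcal{G}_\epsilon|\geq \rho N$,
\[
\mathbb{P}(v\text{ has no neighbour in }\mathcal{G}_\epsilon) \leq \prod_{u\in\mathcal{G}_\epsilon}(1-p_{vu}) \leq \exp(-c\rho\log^{\alpha\gamma}N),
\]
and a union bound over $|S|\leq N$ gives failure probability $\exp(\log N - c\rho\log^{\alpha\gamma}N) \to 0$ because $\alpha\gamma>1$. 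Hence whp every $v \in S$ has a neighbour $u_v\in\mathcal{G}_\epsilon$; concatenating with a geodesic in $\mathcal{G}_\epsilon$ of length $O_\mathbb{P}(\log N)$ produces the desired path $v\to u_v\to\cdots\to u_w\to w$ between any $v,w\in S$.

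The main obstacle is the quantitative matching in the hub step: the hypothesis $\alpha>1/\gamma$ is dictated precisely so that $\log^{\alpha\gamma}N$ outgrows $\log N$ and the union bound survives. The backbone construction is by contrast routine given Theorem \ref{my_diameter_bound}, but one must verify that $g_\epsilon$ is simultaneously bounded and supercritically integrable, which is exactly the role of the hypothesis $\beta>(1-2\gamma)\vee 0$.
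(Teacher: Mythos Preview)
Your proof is correct and follows essentially the same route as the paper: show the induced subgraph on $L_\epsilon$ is a supercritical bounded-weight rank-one network (via $\int_\epsilon^1 \operatorname{f}^2>1$ for small $\epsilon$), apply Theorem~\ref{my_diameter_bound} for its $O_{\mathbb{P}}(\log N)$ diameter, then attach every $v\in S$ to the giant in $L_\epsilon$ by a single edge using the lower bound $p_{vu}\gtrsim N^{-1}\log^{\alpha\gamma}N$ and a union bound that succeeds precisely because $\alpha\gamma>1$. The only cosmetic difference is that the paper separately invokes Proposition~\ref{prop_rankone_snr_giant} to guarantee the $\Theta_{\mathbb{P}}(N)$ size of $\mathcal{G}_\epsilon$, whereas you state this as a consequence of Theorem~\ref{my_diameter_bound}; that theorem only gives componentwise diameter, so you should cite the giant-existence result explicitly.
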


\begin{proof}
The network $G_N$ has a Poisson number of edges between $i$ and $j$, with mean
\[
\beta N^{2\gamma-1}i^{-\gamma}j^{-\gamma}
= \frac{1}{N}
\left(\sqrt{\beta}\left(\frac{i}{N}\right)^{-\gamma}\right)
\left(\sqrt{\beta}\left(\frac{j}{N}\right)^{-\gamma}\right)
\]
and so corresponds precisely to the weight function
$
\operatorname{f} : x \mapsto
\sqrt{\beta} x^{-\gamma}.
$

The subnetwork induced by $L_\epsilon$ has rank one weight function $\operatorname{f} \mathbbm{1}_{( \epsilon,1]}$ and norm
\[
\int_\epsilon^1 \operatorname{f}^2>1
\]
given $\epsilon \in (0,1)$ sufficiently small.

Having fixed $\epsilon$ thus, Proposition \ref{prop_rankone_snr_giant} shows the induced rank one subnetwork on $L_\epsilon$ has a giant component $\mathscr{C}'$ which has $\mathscr{C}' \subseteq \mathscr{C}_{\text{max}}$ whp. Because $|\mathscr{C}'|=\Theta_{\mathbb{P}}(N)$ we have some $\delta>0$ such that the event 
$
\{|\mathscr{C}'|\geq \delta N\}
$ 
occurs with high probability.

Now take some $s \in S$. In the MNR version of the network conditioned on $
\{|\mathscr{C}'|\geq \delta N\}
$  we have $\operatorname{Pois}(\mu_s)$ edges between $s$ and $\mathscr{C}'$, where
\[
\mu_s \geq \frac{1}{N} f(1) f \left( \log^{-\alpha} N \right) \cdot \delta N 
= \beta \delta \log^{\alpha \gamma} N.
\]

Hence, the probability that there is no such edge is bounded by
\[
e^{-\beta \delta \log^{\alpha \gamma} N}=o\left(\frac{1}{N}\right)
\]
given $\alpha \gamma>1$. Then, using $|S|\leq N$, we can connect every $s \in S$ via the union bound. The result follows from Theorem \ref{my_diameter_bound}.

\end{proof}

\section{Markov Chains}

We have a few crucial results and definitions for general Markov chains which we will state in this section, before proving the mixing result of Theorem \ref{theorem_erw_mixing}.

$X = (X_t)_{t \geq 0}$ will be a reversible, irreducible Markov chain with state space $[N] = \{1, \ldots, N\}$, invariant measure $\pi$ and transition rates given by a generator matrix~$Q$. 
Because the chain is irreducible, the hitting time is defined as 
\[ t_{\rm hit} = \max_{k, j \in [N]} \E_k (T_j) \]
where $T_y := \inf\{ t \geq 0 \, : \, X_t = y \}$.

For our bound on the hitting time, we will make use of the well-known correspondence between 
Markov chains and electric networks, see e.g.~\cite{aldous-fill-2014, wilmer2009markov}.
In this context, we associate to $Q$ a graph $G_Q$ with vertex set $[N]$ and connect $i$ and $j$ by an edge, written $i \sim j$, if the conductance $c(ij)$ is nonzero, where the conductance is defined as
\begin{equation}\label{conductance_definition}
c(ij) := \pi(i) Q(i,j) = \pi(j) Q(j,i).
\end{equation}

This is also known as the \emph{ergodic flow} of the edge. Moreover, the interpretation as an electric network lets us define the effective resistance between two vertices $i,j \in [N]$, denoted $\mathcal{R}(i \leftrightarrow j)$, as in \cite[Chapter 9]{wilmer2009markov}.

To state the following proposition, we also define  ${\rm diam}(Q)$ to be the diameter in the graph theoretic 
sense for the graph obtained from $Q$ as above.
This is a standard result, which can be proven using Thomson's principle as in \cite[Proposition 4.4]{fernley2019voter}.

\begin{proposition}\label{prop:max_resistance}
Let  $(X_t)_{t \geq 0}$ be a reversible, irreducible Markov chain on $[N]$ with associated conductances $c$.
Let $P_{i,j}$ be a path from $i$ to $j$ in $G_Q$ and denote by $E(P_{i,j})$ the set of edges in $P_{i,j}$. Then
\[
\mathbb{E}_i \left( T_j \right) +\mathbb{E}_j \left( T_i \right) \leq \sum_{e \in E(P_{i,j})}\frac{1}{c(e)}.
\]

In particular, we have
$ t_{\rm hit} \leq {\rm diam} (Q) \max_{i\sim j \in [N]} \nicefrac{1}{c(ij)}$.
\end{proposition}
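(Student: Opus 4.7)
The plan is to combine the classical continuous-time commute-time identity with the series/monotonicity bound on effective resistance. First I would invoke the identity
\[
\mathbb{E}_i(T_j) + \mathbb{E}_j(T_i) = \mathcal{R}(i \leftrightarrow j),
\]
valid for a reversible, irreducible continuous-time chain with probability invariant $\pi$ and conductances as in \eqref{conductance_definition}. This can be read off from e.g.\ \cite[Chapter 3]{aldous-fill-2014} or \cite[Chapter 9]{wilmer2009markov}. A two-state sanity check confirms that the proportionality constant here is $1$, rather than the $2|E|$ that appears in the more widely quoted discrete-time version; the underlying derivation is the standard harmonic/voltage representation of hitting times combined with reversibility to symmetrise.

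Second, I would upper bound $\mathcal{R}(i \leftrightarrow j)$ by the resistance along the specified path $P_{i,j}$. The quickest route is Thomson's principle: the unit flow that routes one unit of current along the edges of $P_{i,j}$ and zero elsewhere has energy exactly $\sum_{e \in E(P_{i,j})} 1/c(e)$, and $\mathcal{R}(i \leftrightarrow j)$ is the infimum of energies over all unit $i$-to-$j$ flows. Equivalently, by Rayleigh monotonicity one may delete every edge outside $P_{i,j}$, leaving a simple series network whose resistance evaluates to the same sum by the series law. Either way,
\[
\mathcal{R}(i \leftrightarrow j) \leq \sum_{e \in E(P_{i,j})} \frac{1}{c(e)},
\]
which, chained with the identity, yields the main displayed inequality.

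For the concluding bound on $t_{\rm hit}$, I would take $P_{i,j}$ to be a graph-theoretic shortest path in $G_Q$, so that the sum runs over at most ${\rm diam}(Q)$ edges, and bound each $1/c(e)$ by $\max_{k \sim \ell} 1/c(k\ell)$. Non-negativity of $\mathbb{E}_j(T_i)$ allows the same bound on the sum to control $\mathbb{E}_i(T_j)$ individually, and a maximum over $(i,j)$ delivers the stated estimate. There is no real obstacle here, as this is a standard electrical-network argument; the only point of care is the normalisation of the commute-time identity, ensuring that the $\pi$-probability convention together with $c(ij) = \pi(i) Q(i,j)$ produces $\mathcal{R}(i \leftrightarrow j)$ on the right-hand side without any spurious factor of total conductance.
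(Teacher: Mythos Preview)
Your proposal is correct and follows precisely the route the paper indicates: the paper does not give a detailed proof here but simply states that this is a standard result which can be proven using Thomson's principle, and cites \cite{fernley2019voter} for the statement. Your argument via the commute-time identity $\mathbb{E}_i(T_j)+\mathbb{E}_j(T_i)=\mathcal{R}(i\leftrightarrow j)$ (with the $\pi$-probability normalisation of \eqref{conductance_definition}) together with Thomson's principle or Rayleigh monotonicity is exactly the standard derivation the paper has in mind.
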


There are competing definitions of the distance from stationarity, two of which we need to apply the literature results. 
For a Markov chain on $[N]$ define the usual total variation distance and mixing time at TV threshold $\nicefrac{1}{e}$
\begin{equation}\label{eq_mixing_def}
d_{\rm TV}(t):=\frac{1}{2}\max_{x \in [N]}\| p^{(t)}_{x,\cdot}-\pi(\cdot) \|_1,
\quad
t_{\text{\textnormal{mix}}}:=\min \Big\{ t\geq 0 : d_{\rm TV}(t)\leq \tfrac{1}{e}\Big\}.
\end{equation}

We also quantify mixing by the relaxation time
\[
t_{\text{\textnormal{rel}}}:=\max \left\{ \tfrac{-1}{\lambda} : \lambda \text{ is a nonzero eigenvalue of } Q \right\}. 
\]

If $(Y_t)_{t \geq 0}$ is an independent copy of the chain with arbitrary initial condition, then the random meeting time for the two processes is
\[
\tau_{\rm meet}:=\inf_{t \geq 0} \left\{
t:
X_t=Y_t
\right\}
\]
and the expected meeting time is defined by the worst case initial conditions
\[
t_{\rm meet}:=\max_{x,y \in [N]}
\mathbb{E}\left(
\tau_{\rm meet}
\big|
X_0=x,
Y_0=y
\right).
\]

It will often be easier to work with the (expected) meeting time when both chains are started in 
the invariant measure, i.e.\ we define
\[ t_{\rm meet}^\pi :=  \sum_{i, j \in [N]} \pi(i) \pi(j) \E_{i,j}(\tau_{\rm meet}) .\]

Most importantly, this stationary version has the following useful lower bound.

\begin{proposition}[ {\cite[Remark 3.5]{cox2016convergence}} ]\label{lower_meeting_bound}
\[
t_{\text{\textnormal{meet}}}^{\pi} \geq \frac{(1-\sum_{i \in [N]} \pi(i)^2)^2}{4\sum_{i \in [N]} q(i) \pi(i)^2}, 
\]
where $q(i) = - Q(i,i)$ is the vertex rate of the dual dynamic.
\end{proposition}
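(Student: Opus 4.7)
The plan is to view the two independent processes $(X_t, Y_t)$ as a single reversible Markov chain on $[N]^2$ with stationary distribution $\mu(x,y) = \pi(x) \pi(y)$, product generator $L_P$, and associated Dirichlet form $\mathcal{E}_P(f,g) = -\langle f, L_P g \rangle_\mu$; the meeting time is then the hitting time $T_D$ of the diagonal $D := \{(i,i) : i \in [N]\}$, whose $\mu$-mass is $\sum_i \pi(i)^2$. The bound will come from applying the Cauchy--Schwarz inequality to $\mathcal{E}_P$ with a carefully chosen pair of test functions.

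First I would introduce the hitting-time function $\phi(x,y) := \mathbb{E}_{x,y}[T_D]$, extended by $0$ on $D$, which by Dynkin's formula satisfies $L_P\phi \equiv -1$ on $D^c$. Applying the reversibility identity $\mathcal{E}_P(\phi, g) = -\langle g, L_P \phi\rangle_\mu$ in the two cases $g = \phi$ and $g = \mathbbm{1}_{D^c}$, and using that both $\phi$ and $\mathbbm{1}_{D^c}$ vanish on $D$ to kill the otherwise irrelevant values of $L_P\phi$ there, gives
\[
\mathcal{E}_P(\phi, \phi) = \int_{D^c} \phi \, d\mu = t_{\rm meet}^\pi,
\qquad
\mathcal{E}_P(\phi, \mathbbm{1}_{D^c}) = \int_{D^c} d\mu = 1 - \sum_i \pi(i)^2.
\]
Next, for the indicator, $\mathcal{E}_P(\mathbbm{1}_{D^c}, \mathbbm{1}_{D^c})$ is simply the stationary ergodic flow across the boundary of $D$; since the product chain exits every diagonal state $(i,i)$ at rate $q(i)+q(i) = 2q(i)$ (either coordinate can jump, and every such jump lands off-diagonal), this flow equals $2 \sum_i \pi(i)^2 q(i)$.

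Combining everything with Cauchy--Schwarz for the positive semidefinite symmetric bilinear form $\mathcal{E}_P$ then yields
\[
\Bigl(1 - \sum_i \pi(i)^2\Bigr)^2 = \mathcal{E}_P(\phi, \mathbbm{1}_{D^c})^2 \leq \mathcal{E}_P(\phi,\phi) \cdot \mathcal{E}_P(\mathbbm{1}_{D^c}, \mathbbm{1}_{D^c}) = 2\, t_{\rm meet}^\pi \sum_i q(i) \pi(i)^2,
\]
which rearranges to the claim (in fact with the stronger constant $\tfrac12$ in place of $\tfrac14$ in the denominator). The argument is short and I do not anticipate any real obstacle: the only point requiring care is the Green-type identity for $\phi$ once extended by $0$ onto $D$, but this is routine because the undetermined values of $L_P\phi$ on $D$ are annihilated in the inner products by the factor $\phi$ or $\mathbbm{1}_{D^c}$.
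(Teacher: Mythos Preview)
The paper does not supply a proof of this proposition; it is simply quoted from \cite[Remark 3.5]{cox2016convergence}. So there is no paper argument to compare against, and the question is only whether your argument stands on its own.

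It does. The product chain on $[N]^2$ is reversible with respect to $\mu = \pi\otimes\pi$, so the Dirichlet form $\mathcal{E}_P$ is a positive semidefinite symmetric bilinear form and Cauchy--Schwarz applies. The Poisson-equation identity $L_P\phi=-1$ on $D^c$ for $\phi(x,y)=\E_{x,y}[T_D]$ is standard, and since $\phi$ and $\mathbbm{1}_{D^c}$ both vanish on $D$ the values of $L_P\phi$ on $D$ never enter the two inner products, exactly as you note. The computation $\mathcal{E}_P(\mathbbm{1}_{D^c},\mathbbm{1}_{D^c})=2\sum_i q(i)\pi(i)^2$ is correct because every jump from a diagonal state $(i,i)$ moves one coordinate to some $j\neq i$ and hence leaves $D$, so the exit rate really is $2q(i)$. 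Putting these together yields
\[
t_{\rm meet}^\pi \ \ge\ \frac{(1-\sum_i\pi(i)^2)^2}{2\sum_i q(i)\pi(i)^2},
\]
which is the claimed inequality with the sharper constant $2$ in place of $4$. This is a clean Dirichlet/variational route to the bound; the only cosmetic remark is that $\phi$ is already $0$ on $D$ by definition of $T_D$, so no ``extension'' is needed.
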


We now use the concept of the chain $(X_t)_{t \geq 0}$ \emph{observed on a subset} $V \subset [N]$ described in Section 2.7.1 of \cite{aldous-fill-2014}.

\begin{definition}[Partially observed chain]\label{define_partial_obs}
For a chain $(X_t)_{t \geq 0}$ on $[N]$, take $V \subset [N]$ and define a clock process
\[
U(t):=\int_0^t \mathbbm{1}_{V} \left( X_s \right) {\rm d}s, 
\]
with generalised right-continuous inverse $U^{-1}$. Then the partially observed chain $(P_t)_{t \geq 0}$ is defined for any  $t \geq 0$ via
\[
P_t:=X_{U^{-1}(t)}.
\]

This corresponds to the deletion of states in $V^c$ from the trajectory of $(X_t)_{t\geq 0}$, and thus it can be seen that  $(P_t)_{t \geq 0}$ is Markovian and has the natural stationary distribution
\[
\frac{\pi(\cdot) \mathbbm{1}_{V} (\cdot)}{\pi(V)}.
\]
\end{definition}

We can define the random \emph{subset meeting time} $\tau_{\text{\textnormal{meet}}}(A)$ analogously to $\tau_{\text{\textnormal{meet}}}$ except for the partially observed product chain on $A \times A$ rather than the full chain. Similarly,  $t^\pi_{\text{\textnormal{meet}}}(A)=\mathbb{E}_{\pi \otimes \pi}(\tau_{\text{\textnormal{meet}}}(A))$.

\subsection{Mixing Time for the Variable Speed Random Walk}\label{section_erw_mixing}

Our approach is inspired by the structural theorems in \cite{fountoulakis2008evolution} and \cite{benjamini2014mixing} which led to mixing time bounds for the Erd\H{o}s-R\'enyi giant component, and the ``decorated expander'' of \cite{ding2014anatomy} who work in discrete time.

The mixing time is bounded by finding an Erd\H{o}s-R\'enyi giant component as a subgraph and growing it to a subgraph spanning the giant component of $G_N$. To find this subgraph, then, we require the artificial condition $\beta>1$.
Transitions in the edge density $\beta$, excluding those caused by the appearance of the giant component, are unusual for interacting particle systems on networks and so one might argue that the large $\beta$ condition is not an important omission. Given that our condition is numerically seen to be satisfied for any $\gamma \in (0,1)$ when $\beta>\nicefrac{5}{4}$, it also doesn't omit very much.

The VSRW of Definition \ref{def_vsrw} is the dual Markov chain for the discursive voter models when $\theta=1$.
A monotonicity in $\theta$ of these models will allow all $\theta \geq 1$ mixing results to follow from a bound on the mixing time of the variable speed random walk on the SNR graph, and hence most of the chapter will be dedicated to proving that bound in the VSRW case. Of course the variable speed random walk is a natural model and so this bound has additional independent interest.

\begin{definition}[Cheeger constant]
For any connected graph $G$ on $[N]$ we define the (VSRW) Cheeger constant
\[
\Phi(G) := \inf_{\stackrel{S \subset [N]}{|S|\leq \nicefrac{N}{2}}}\frac{e \left( S: S^C \right)}{\left| S \right| }
\]
where $e \left( A: B \right)$ denotes the number of edges incident to a vertex in both $A$ and $B$, i.e. the number of edges between sets $A$ and $B$.
\end{definition}

There are many parametrisations leading to slightly different versions of the below result; here is the version for the choice above applied to VSRW.

\begin{proposition}\label{theorem_cond_and_relax}
The VSRW relaxation time for a connected graph $G$ on $[N]$ has
\[
\frac{1}{2 \Phi(G)}
\leq
t_{\rm rel}
\leq
\frac{8}{\Phi(G)^2}
\max_{v \in [N]} \de(v)
\]
\end{proposition}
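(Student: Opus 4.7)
The plan is to identify the combinatorial $\Phi(G)$ with the probabilistic bottleneck ratio of the VSRW and then invoke the standard Cheeger inequality for reversible Markov chains in both directions. Since the generator $Q(i,j)=\mathbbm{1}_{i \sim j}$ (for $i \neq j$) is symmetric off-diagonal, the invariant measure is uniform, $\pi\equiv 1/N$, and the ergodic flow \eqref{conductance_definition} reduces to $c(ij)=\mathbbm{1}_{i \sim j}/N$. Consequently the usual probabilistic bottleneck ratio satisfies
\[
\Phi_\star\;:=\;\inf_{\pi(S)\leq 1/2}\frac{\sum_{i \in S,\, j \in S^c} \pi(i) Q(i,j)}{\pi(S)}\;=\;\inf_{|S|\leq N/2} \frac{e(S:S^c)/N}{|S|/N}\;=\;\Phi(G),
\]
so the proposition is just Cheeger's inequality (in both directions) restated for the combinatorial $\Phi(G)$ and $\lambda_1 = 1/t_{\text{\textnormal{rel}}}$.

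The lower bound $t_{\text{\textnormal{rel}}} \geq 1/(2\Phi(G))$ is the easy direction. I would use the Dirichlet variational formula for $\lambda_1$ with test function $f=\mathbbm{1}_S$, where $S$ attains the infimum in $\Phi(G)$. A direct computation gives $\cE(f,f)=e(S:S^c)/N$ and ${\rm Var}_\pi(f)=(|S|/N)(1-|S|/N) \geq |S|/(2N)$ (using $|S|\leq N/2$), so the Rayleigh quotient is at most $2\Phi(G)$ and $\lambda_1 \leq 2\Phi(G)$ follows.

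For the harder upper bound I would pass to the discrete-time lazy uniformisation $\tilde P := I + Q/(2\max_v \de(v))$, which is a reversible Markov chain on $[N]$ with the same uniform invariant measure, eigenvalues in $[0,1]$, and bottleneck ratio $\Phi_\star(\tilde P)=\Phi(G)/(2\max_v \de(v))$. The classical discrete-time Cheeger inequality $1 - \lambda_2(\tilde P) \geq \Phi_\star(\tilde P)^2/2$, combined with the spectral correspondence $\lambda_1(Q) = 2\max_v \de(v)\,(1-\lambda_2(\tilde P))$, then yields $\lambda_1(Q) \geq \Phi(G)^2/\big(4\max_v \de(v)\big)$, which comfortably implies the stated bound $t_{\text{\textnormal{rel}}} \leq 8\max_v \de(v)/\Phi(G)^2$. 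The main obstacle is merely bookkeeping the constants between the continuous- and discrete-time conventions; there is no analytic subtlety, and since Section~\ref{section_erw_mixing} only uses this proposition up to polylogarithmic corrections (via the standard dictionary between relaxation and mixing times), the slight looseness in the $\max_v \de(v)$ factor is immaterial.
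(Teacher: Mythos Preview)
Your argument is correct. Both directions are sound: the lower bound via the test function $\mathbbm{1}_S$ in the Dirichlet variational formula is clean, and the upper bound via the lazy uniformisation $\tilde P = I + Q/(2\max_v \de(v))$ followed by the discrete Jerrum--Sinclair inequality is carried out accurately (in fact you obtain the slightly sharper constant $4$ in place of $8$).

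The paper's route is the same mathematics packaged differently. Rather than arguing from scratch, it quotes the Aldous--Fill continuous-time Cheeger bounds directly in terms of the parameter
\[
\tau_c = \sup_{A \subset [N]} \frac{\pi(A)\pi(A^c)}{\sum_{x\in A,\,y\notin A}\pi(x)Q(x,y)}
= \sup_{A} \frac{|A|}{e(A:A^c)}\cdot\frac{N-|A|}{N},
\]
namely $t_{\rm rel}\le 8\tau_c^2 \max_v \de(v)$ and $t_{\rm rel}\ge \tau_c$, and then sandwiches $\tfrac{1}{2\Phi(G)}\le \tau_c \le \tfrac{1}{\Phi(G)}$ by restricting to $|A|\le N/2$. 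Your version has the advantage of being self-contained and making the role of $\max_v \de(v)$ transparent through the uniformisation step; the paper's version is shorter because it outsources the analytic work to the reference. Neither approach has any real advantage for the downstream application, which as you note only needs the bounds up to polylogarithmic factors.
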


\begin{proof}
Cheeger's inequality \cite[Theorem 4.40]{aldous-fill-2014} gives
\[
t_{\rm rel}
\leq
8 \tau_c^2 \max_{v \in [N]} \de(v)
\]
for the different parameter $\tau_c$ which has
\[
\tau_c:= \sup_{A \subset [N]}
\frac{|A|}{e(A:A^c)}
\cdot
\frac{N-|A|}{N}
\leq
\sup_{\stackrel{A \subset [N]}{
|A|\leq \frac{1}{2}}}
\frac{|A|}{e(A:A^c)}
=\frac{1}{\Phi(G)}.
\]

For the other direction, this $\tau_c$ definition is symmetric so we can assume w.l.o.g. that $|A|\leq N-|A|$. Then \cite[Corollary 4.37]{aldous-fill-2014} completes the proof:
\[
t_{\rm rel}
\geq
\tau_c
\geq
\frac{1}{2}
\sup_{A \subset [N]}
\frac{|A|}{e(A:A^c)}
=\frac{1}{2\Phi(G)}.
\]
\end{proof}

\begin{definition}
The Erd\H{o}s-R\'enyi graph on $[N]$ with parameter $\beta$ has independent edges with homogeneous probabilities $p_{ij}=\nicefrac{\beta}{N}$ for every unordered pair $i \neq j$.
\end{definition}

\begin{lemma}\label{lemma_er_conductance}
If $G$ is the largest component of an Erd\H{o}s-R\'enyi graph on $[N]$ with parameter $\beta>1$ then
\[
\Phi(G) =
\Omega_{\mathbb{P}}
\left(
\frac{1}{\log^2 N}
\right)
\]
\end{lemma}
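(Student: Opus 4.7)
The plan is to exploit the well-known structural decomposition of the supercritical Erd\H{o}s-R\'enyi giant component into a ``kernel'' (the 2-core with its long induced paths contracted to edges), plus the long paths in the 2-core, plus pendant trees hanging off the 2-core. Two classical facts drive the argument: in the supercritical regime $\beta>1$, the kernel has $\Theta_{\mathbb{P}}(N)$ vertices and maximum degree bounded in probability, while both the lengths of the contracted $2$-core paths and the heights of the pendant trees are $O_{\mathbb{P}}(\log N)$. These facts can be extracted, for instance, from the structural theorems of Fountoulakis--Reed and Ding--Lubetzky--Peres used in \cite{fountoulakis2008evolution, benjamini2014mixing, ding2014anatomy}.

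Given a candidate set $S \subset V(G)$ with $1 \leq |S| \leq |V(G)|/2$, I would first project $S$ onto the kernel $\mathcal{K}$. Call $S_\mathcal{K}$ the set of kernel vertices $v$ such that at least half of the vertices in the ``cluster'' hanging off $v$ (i.e.\ $v$ together with the pendant tree rooted at $v$ and the halves of the adjacent contracted paths) lie in $S$; define $S_\mathcal{K}^c$ symmetrically. Since the kernel has constant-bounded maximum degree and expansion $\Phi(\mathcal{K}) = \Omega_{\mathbb{P}}(1)$ whp (a configuration-model expansion bound, or equivalently the result of Benjamini--Kozma--Wormald applied to the kernel), the edge boundary in $\mathcal{K}$ between $S_\mathcal{K}$ and its complement is $\Omega(\min(|S_\mathcal{K}|,|S_\mathcal{K}^c|))$. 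Each such kernel boundary edge corresponds, in $G$, to either a direct kernel edge or a contracted path of length at most $L = O(\log N)$ whp. A path of length $L$ crossing between $S$ and $S^c$ at its endpoints contains at least one boundary edge of $G$, so the contribution from the kernel to $e(S,S^c)$ in $G$ is at least a constant times $\min(|S_\mathcal{K}|,|S_\mathcal{K}^c|) / L$.

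To relate this back to $|S|$, observe that if a cluster $C_v$ around a kernel vertex $v$ has $v \in S_\mathcal{K}$ but contains vertices of $S^c$ (or vice versa), then $C_v$ contributes at least one internal boundary edge of $G$ because pendant trees and half-paths are connected. In particular $e(S,S^c) \geq \#\{v : C_v \text{ is split}\}$. On the other hand, every vertex of $S$ either lies in a cluster $C_v$ with $v \in S_\mathcal{K}$, or lies in a cluster $C_v$ with $v \in S_\mathcal{K}^c$ that is split. Since each cluster has size at most $O(\log N)$ whp, either $|S| \leq L \cdot |S_\mathcal{K}|$ (so the kernel-expansion bound gives $e(S,S^c) \geq \Omega(|S_\mathcal{K}|/L) \geq \Omega(|S|/L^2)$), or the number of split clusters is $\Omega(|S|/L)$, directly yielding $e(S,S^c) = \Omega(|S|/L)$. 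In either case $e(S,S^c)/|S| = \Omega(1/\log^2 N)$ whp, which is the claimed bound.

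The main obstacle is the bookkeeping in the last step: handling carefully the case $|S_\mathcal{K}| > N/2$ (swap roles with $S_\mathcal{K}^c$, and check the resulting set is still of size at most $|V(G)|/2$ up to constants), and propagating the high-probability maximum cluster size bound uniformly over all sets $S$. The uniformity is obtained implicitly, since the geometric quantities (kernel expansion, cluster sizes, contracted path lengths) are all properties of $G$ alone, fixed before choosing $S$, and the subsequent combinatorial argument is deterministic. I would not worry about optimizing the logarithmic exponent, as the weaker bound $\Omega(1/\log^2 N)$ already suffices for Theorem~\ref{theorem_erw_mixing} via Proposition~\ref{theorem_cond_and_relax}.
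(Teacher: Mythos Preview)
Your proposal is correct in spirit --- it is essentially the ``decorated expander'' analysis underlying \cite{fountoulakis2008evolution, benjamini2014mixing, ding2014anatomy} --- but it takes a much longer route than the paper. The paper does not re-derive any structure: it simply imports the CSRW mixing bound $t_{\rm mix}(G)=O_{\mathbb{P}}(\log^2 N)$ from \cite[Theorem 1.2]{fountoulakis2008evolution}, passes to $t_{\rm rel}$ via \cite[Lemma 4.23]{aldous-fill-2014}, and then uses the easy direction of Cheeger's inequality (\cite[Corollary 4.37]{aldous-fill-2014}) together with the trivial bound $\de(A)\geq |A|$ to read off $\Phi(G)\geq \tfrac{1}{2\tau_c}\geq \tfrac{1}{2 t_{\rm mix}}$. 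That is the whole proof.

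Your approach buys self-containment and would be natural if the structural decomposition were needed elsewhere in the section; the paper's approach buys a four-line proof by treating the Fountoulakis--Reed result as a black box. Since the paper later builds its own layered spanning subgraph of the SNR giant anyway (the $\sG_1,\dots,\sG_5$ construction), it has no need to reopen the Erd\H{o}s--R\'enyi structure here. One minor caution on your side: the case analysis when $|S_\mathcal{K}|>|\mathcal{K}|/2$ and the partition of $V(G)$ into clusters (splitting the contracted paths between their two kernel endpoints) both require a line or two of care, as you note; neither is a genuine obstruction.
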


\begin{proof}
From \cite[Theorem 1.2]{fountoulakis2008evolution} we have \emph{for the constant speed random walk (CSRW)}, the simple random walk with steps at constant Poisson rate $1$,
\[
t^{\rm C}_{\rm mix} \left( G \right)
=O_{\mathbb{P}}\left( \log^2 N \right)
\]
and \cite[Lemma 4.23]{aldous-fill-2014} translates this to the same bound on $t_{\rm rel} \left( G \right)$. Then \cite[Corollary 4.37]{aldous-fill-2014} applies to $\tau_c$ for the CSRW which has form
\[
\tau^{\rm C}_c
:=
\sup_{A \subset [N]}
\frac{\pi(A)\pi(A^c)}{\sum_{x \in A, y \notin A}\pi(x)Q(x,y)}
=
\frac{1}{\de([N])}
\sup_{A \subset [N]}
\frac{\de(A)\de(A^c)}{e(A:A^c)}
\]
and so we can deduce a bound on VSRW conductance from CSRW mixing
\[
t^{\rm C}_{\rm mix}
\geq
\tau^{\rm C}_c
\geq
\frac{1}{2}
\sup_{A \subset [N]}
\frac{\de(A)}{e(A:A^c)}
\geq
\frac{1}{2}
\sup_{A \subset [N]}
\frac{|A|}{e(A:A^c)}
=\frac{1}{2\Phi(G)}
\]
which after inversion is the desired bound.
\end{proof}

We need to explore the graph $G_N$ through its local tree approximation, and the following useful algorithm comes from \cite[Proposition 3.1]{norros2006conditionally} applied to our graph parameters.

\begin{lemma}\label{lemma_tree_exploration}
From any subset $V \subset [N]$ with induced subgraph $G_V$, we can grow the subgraph to $G_N$ in the following way:
\begin{itemize}
\item Fix an arbitrary ordering of $V$ and root $\circ \in V$.
\item In first this order and then in breadth-first order from $\circ$, select some  vertex $i\in[N]$:
\begin{itemize}
\item \emph{Explore} $i$ by giving it putative offspring drawn independently from $\operatorname{Pois}(w(i))$, with the weight
\[
w(i):=\beta N^{2\gamma-1}i^{-\gamma} \sum_{j=1}^N j^{-\gamma}.
\]
\item \emph{Label} each of these putative offspring with an independent label drawn from the mark distribution $M$ with
\begin{equation}\label{eq_mark_dist}
\mathbb{P}(M=k) \propto k^{-\gamma}\mathbbm{1}_{k \in [N]}.
\end{equation}
\item \emph{Thin} by deleting any of these offspring which shares a label with an explored vertex.
\item Any offspring with labels shared by an unexplored vertex are \emph{cycle edges} and the offspring should be identified with the other vertex sharing its label.
\end{itemize}
\end{itemize}
\end{lemma}
The edges which lead to offspring not deleted are exactly distributed as the edges of the SNR graph $G_N\setminus G_V$.

We can now prove the main result of this section bounding the VSRW mixing time on an SNR network. 
This proof has the following essential structure:
\begin{itemize}
\item Use that $\beta>1$ to find an Erd\H{o}s-R\'enyi giant component as a subgraph of the giant component of $G_N$, which is known to be fast mixing;
\item Grow it to a spanning subgraph of the giant of $G_N$ by attaching trees of size $O^{\log N}_{\mathbb{P}}\left( 1 \right)$, argue that the resultant structure also has fast mixing;
\item Because the VSRW has a symmetric generator, completing the internal edges can only accelerate relaxation time and so we have controlled the mixing on the component of interest.
\end{itemize}

\erwmixing*

\begin{proof}
We make here Assumption \ref{assumption_large_beta}, and show later in Proposition \ref{prop_beta_3} that this assumption is given by $\beta \geq 3$ and in Proposition \ref{prop_beta_2log} by $\beta > 2\log 2$ if also $\gamma \geq \nicefrac{1}{2}$ (note that in either case we have $\rho>\nicefrac{1}{2}$ whenever $\beta>2 \log 2$).

Recall that the MNR network is constructed by the weight profile 
$
\operatorname{f} : x \mapsto \sqrt{\beta} x^{-\gamma} 
$ 
 on $(0,1]^2$ in that there are $\operatorname{Pois}\left(\frac{1}{N} \operatorname{f}\left(\frac{v}{N}\right)\operatorname{f}\left(\frac{w}{N}\right)\right)$ edges between each pair of distinct vertices $v,w \in [N]$. Thinking of the graph then as a Poisson point process on $(0,1]^2$, we can decompose its mass into a sum of constituent parts which we will realise independently to construct a subgraph of the MNR network.

For some small $\epsilon>0$, we distinguish the vertex set of high weight vertices
\[
H_\epsilon=\left[ \left\lfloor \epsilon N \log^{-\frac{1}{\gamma}} N \right\rfloor \right]
\]
and assume that $\beta>1$ so that we can use the constant part $(x,y) \mapsto \beta$ to construct an Erd\H{o}s-R\'enyi graph $\sN_{\rm ER}$ with edge probabilities $\nicefrac{\beta}{N}$ on the set $[N]\setminus H_\epsilon$. In fact, this part generates the Poissonian Erd\H{o}s-R\'enyi  graph $\sN_{\rm PER}$ with edge probabilities 
$
1-e^{\nicefrac{-\beta}{N}} 
$ 
but by \cite[Theorem 6.18]{van2016random} we have asymptotic equivalence 
\[
\mathbb{P}\left(
\sN_{\rm ER} \neq \sN_{\rm PER}
\right)
\rightarrow 0.
\]

In this Erd\H{o}s-R\'enyi graph, realise the largest component $\sG_{\rm ER}$ on a vertex set denoted $\sC_{\rm ER}$. By constructing the largest component, we have conditioned that the other components are smaller. Remove the $o(N)$ of these vertices in $H_\epsilon$ to leave
\[
\sC_1=\sC_{\rm ER} \setminus H_\epsilon.
\]

We note further, as $|H_\epsilon|=o(N)$, that
\[
\frac{\left| \sC_1 \right|}{N} \stackrel{\mathbb{P}}{\rightarrow} \rho
\]
where $\rho \in (0,1)$ is the unique solution to $\rho +e^{-\beta \rho}=1$ (see e.g. \cite[Theorem 3.1]{bollobas2007phase}). Hence this is the giant component, and we find for any small $\delta\in \left(0,\nicefrac{1}{2}\right)$ that $|\sC_1|\geq (1-\delta)\rho N$ with high probability. 
By using that
\begin{equation}\label{eq_betarho}
1-\rho=e^{-\beta \rho}<\frac{1}{1+\beta\rho} \implies \beta(1-\rho)<1,
\end{equation}
we can take $\delta$ sufficiently small such that $\beta(1-(1-\delta)\rho)<1$ which leads to, on the event $\{|\sC_1|\geq (1-\delta)\rho N\}$, exploration of the other components in $[N]\setminus H_\epsilon$ being dominated by subcritical Poisson-Galton-Watson trees. These subcritical trees have maximal size $O_{\mathbb{P}}(\log N)$. Therefore, on this first conditioning $|\sC_1|\geq (1-\delta)\rho N$ which is a high probability event, the further event that they do not form a larger component than $|\sC_1|$ occurs with high probability. Thus conditioned and unconditioned models are asymptotically equivalent and so we can work with independent edges outside $\sC_1$ in the remainder of the proof (and more, by another appeal to asymptotic equivalence, return to generating these edges with the Poisson probabilities $1-\exp(\nicefrac{-\beta}{N})$).

The next extension to the subgraph is to attach the set $H_\epsilon$.
Note between any $v \in \sC_1$ and $h \in H_\epsilon$ we have a Poisson number of edges with parameter at least $\beta \left( \epsilon^{-\gamma} \log N\right)$.

$H_\epsilon$ is attached to $\sC_1$ vertex by vertex: construct a function $n : H_\epsilon \rightarrow \sC_1$ iteratively by giving the lowest unpaired index $h \in H_\epsilon$ its lowest neighbour in $\sC_1 \setminus n \left( [h-1] \cap H_\epsilon\right)$ which we set as $n(h)$, and connect $h$ to $\sC_1$ by this single edge. Because $|H_\epsilon|<\delta \rho N$ deterministically and $|\sC_1|>(1-\delta) \rho N$ with high probability, we have (whp) at least $(1-2\delta) \rho N$ available vertices in $\sC_1$ to pair to at any stage of this iteration. The mean number of edges to available vertices that each $h \in H$ will see is thus at least
\[
(1-2\delta) \rho N \cdot \frac{1}{N}\beta \epsilon^{-\gamma} \log N
> \beta \log N,
\]
by taking $\epsilon$ small enough such that $(1-2\delta) \rho \epsilon^{-\gamma}>1$.

We observe $\mathbb{P}\left(\operatorname{Pois}(\beta \log N)=0 \right)=N^{-\beta}=o\left(\nicefrac{1}{N}\right)$ and conclude by the union bound that with high probability we are successful in constructing this injective function $n : H \rightarrow \sC_1$. The graph $\sG_2$ is $\sG_1$ with every vertex in $H_\epsilon$ attached as a leaf in this way, on the vertex set $\sC_2=\sC_1 \cup H_\epsilon$.

Then on this same vertex set $\sC_2$ we realise the remaining Poisson kernel to make this the \emph{induced} subgraph on $\sC_2=:\sC_3$. This means that between any pairs $\{v,w\}$ of either the form $v,w \in \sC_1$ or the form
\[v \in H_\epsilon, 
w \in n \left( [v-1] \cap H_\epsilon \right)
\cup
\left(
\sC_1 \setminus [n(v)]
\right)
\cup H_\epsilon,
\]
we have an independent Poisson number of edges with  mean parameter 
$
\frac{\beta}{N}\left(N^{2\gamma} v^{-\gamma} w^{-\gamma}-1 \right).
$ 
Flatten multiple edges and call the resultant simple graph $\sG_3$.

At this point we should begin to discuss the mixing times. We start with Lemma \ref{lemma_er_conductance} which lower bounds the Cheeger constant of $\sG_1$
\[
\Phi(\sG_1) =
\Omega_{\mathbb{P}}
\left(
\frac{1}{\log^2 N}
\right).
\]

Note that a set of minimal Cheeger constant for $\sG_2$ is simply a connected subset of $\sC_1$ with its pendant leaves included. 
Because each vertex in $H$ is attached to a distinct vertex in $\sC_1$, the worst case is that every one gets a pendant edge and hence $\Phi(\sG_2)\geq \Phi(\sG_1)/2$. 
By Proposition \ref{theorem_cond_and_relax} we deduce
\[
t_{\rm rel} \left( \sG_2 \right)
\leq
\frac{8 }{\Phi^2\left( \sG_2 \right)}
\max_{v \in \sC_2} \de_{\sG_2}(v)
\leq
\frac{32 }{\Phi^2\left( \sG_1 \right)}
\max_{v \in \sC_1}(1+ \de_{\sG_1}(v))
= O\pl\left( \log^{5} N \right).
\]

For the third graph $\sG_3$ we claim by \cite[Corollary 3.28]{aldous-fill-2014} that adding the internal edges did not increase the relaxation time, and so we have the same bound $t_{\rm rel}(\sG_3) \leq t_{\rm rel}(\sG_2)$. By applying \cite[Lemma 4.23]{aldous-fill-2014} we can turn this into a bound on the mixing time
\begin{equation}\label{eq_g3_mixing}
t_{\rm mix}(\sG_3)
\leq
t_{\rm rel}(\sG_3)\left( 1+ \frac{1}{2} \log \left| \sC_3 \right| \right)
=O\pl\left( \log^{6} N \right).
\end{equation}

$\sG_3$ is an \emph{induced} subgraph which will form the fast-mixing core. 
The rest of the construction is by adding pendant trees to create a spanning subgraph which contains this fast-mixing core. 

To this end, in $\sG_5$, we want to not create any additional cycles -- we want to span the rest of the giant component only by growing trees. Therefore we will explore the neighbourhood of every vertex in $\sC_4 \setminus \sC_3$ while thinning any label in $\sC_4$, and any labels previously seen in this $\sG_5$ construction. 
This exploration can be done with the usual thinned Galton-Watson exploration of Lemma \ref{lemma_tree_exploration} using an independently drawn label from the distribution \eqref{eq_mark_dist} for each vertex, and just skipping the step concerning cycle edges which are instead deleted.   
In Lemma \ref{g4_subcrit}, later, we will argue that these explorations are subcritical and hence maximally $O_{\p}(\log^3 N)$.

From this graph $\sG_5$ of a core $\sG_3$ with pendant trees, let $\sG_4$ be the graph $\sG_3 \leq \sG_4 \leq \sG_5$ containing the ball of radius $1$ around $\sG_3$ -- i.e. only the first vertex of each pendant tree.

Recall now Definition \ref{define_partial_obs} of the partially observed chain.
Because $\sG_5 \setminus \sG_3$ is composed of pendant subtrees which attach to $\sC_3$ at a single vertex, the walker leaves from and returns to $\sC_3$ at that same vertex. Hence, 
the VSRW on $\sG_5$ partially observed on $\sC_3$ has the same dynamic as the VSRW on $\sG_3$ which we recall from \eqref{eq_g3_mixing} had mixing time $O_{\mathbb{P}}\left( \log^{6} N \right)$.

Define the $\sC_3$ occupancy clock of a walker $(W_t)_t$
\[
\sigma_{\sC_3}(t):=\int_0^t \mathbbm{1}_{W_s \in \sC_3} {\rm d}s
\]
so that what we meant above precisely is that, by \cite[Theorem 1.1(b)]{fill1991time}, we can construct a strong stationary time $T$ with 
\[
\p\left( W_{\sigma_{\sC_3}(T)}=v\right)= \frac{\mathbbm{1}_{v \in\sC_3}}{|\sC_3|}
\]
and then by \cite[Lemma 6.17]{wilmer2009markov} and \cite[Lemma 4.5]{aldous-fill-2014}
\begin{equation}\label{eq_T_tail}
\mathbb{P}(\sigma_{\sC_3}(T)>t)\leq 4 d^{(\sG_3)}_{\rm TV}\left(\tfrac{t}{2}\right)\leq
4e^{- \left\lfloor\tfrac{t}{2 \, t_{\rm mix}(\sG_3)}\right\rfloor}
\end{equation}
where $d^{(\sG_3)}_{\rm TV}$ is our notation for the usual worst-case total variation distance but for the VSRW on $\sG_3$. 
By Lemma \ref{lemma_log_degrees} for the small weights, and from an easy argument with Poisson large deviations for the others,
\begin{equation}\label{eq_degree_handwaving}
\max_{v \in [N]}
\frac{\de(v)}{\left( \frac{N}{v} \right)^\gamma}
=
O_{\mathbb{P}}(\log N)
\end{equation}
which we combine with the observation $\de_{\sG_5}(v)\leq \de(v)$ and Lemma \ref{lemma_high_internal_edges} after this proof (taking $\epsilon$ small enough to give $\beta \rho \epsilon^{-\gamma}>32$) to deduce that vertices $v \in \sC_3$ have neighbourhoods $\Gamma(v)$ with
\[
\min_{v \in \sC_3} \frac{\left|\Gamma(v)\cap \sC_1\right|}{\de_{\sG_5}(v)}
=
\Omega_{\mathbb{P}} \left(
\frac{1}{\log^2 N}
\right).
\]

For the VSRW on this graph $\sG_5$, the maximal expected time to escape a pendant tree (which we see in the proof of Lemma \ref{g4_subcrit} is maximally of size $O_{\mathbb{P}}\left( \log^{3} N \right)$) is $O_{\mathbb{P}}\left( \log^{6} N \right)$ by Proposition \ref{prop:max_resistance}.

Therefore, by taking geometrically many attempts to escape vertices in $H_\epsilon$, we conclude the maximal expected time to hit the set $\sC_1$ (from any initial vertex in  $\sC_{\rm max}$ which is the vertex set of $\sG_5$) is $O_{\mathbb{P}}\left( \log^{8} N \right)$.

We expect to hit $\sC_1$ in maximal time $t_{\rm hit}(\sC_1)=O_{\mathbb{P}}\left( \log^{8} N \right)$, and by Lemma \ref{lemma_log_degrees} we expect to stay in that vertex for an exponential waiting period of mean at least 
\[
a=
\min_{v \in \sC_1}
\frac{1}{\de_{\sG_5}(v)}=
\Omega_{\mathbb{P}}\left( \frac{1}{\log N} \right).
\] 

By Markov's inequality, we have at least probability $\nicefrac{1}{2}$ to hit before time $2t_{\rm hit}(\sC_1)$: if so we wait at least time $a$ with probability $\nicefrac{1}{e}$ and then in either case we insert another hitting time to bring the walker back to $\sC_1$. 
Therefore we have a Chernoff bound
\[
\p\left(
\sigma_{\sC_3}(2t_{\rm hit}(\sC_1)\left\lceil 3 e k \right \rceil +ak)\leq ak
\right)
\leq\p\left(
\operatorname{Bin}\left(\left\lceil 3 e k \right \rceil, \frac{1}{2e} \right) \leq k
\right)\leq e^{\nicefrac{-k}{12}}
\]
or, more loosely,
\[\p\left(
\sigma_{\sC_3}(t)\leq a\left\lfloor \frac{t}{20 \,  t_{\rm hit}(\sC_1)}\right\rfloor
\right)
\leq \exp\left(-\frac{t}{240 \, t_{\rm hit}(\sC_1)}\right)\]
(recall also $\nicefrac{a}{t_{\rm hit}(\sC_1)}=\Omega_{\mathbb{P}}\left( \nicefrac{1}{\log^9 N} \right)$).
Together with Equation \eqref{eq_T_tail}, this provides for large constants $C,C'>0$ and small $\delta>0$
\[
\p \left( T>t  \right)
\leq
\p \left( \sigma_{\sC_3}(T)>C \log^6 N  \right)
+
\p \left( \sigma_{\sC_3}(t)\leq C \log^6 N  \right)\leq \frac{\delta}{2}
\]
at $t=C'\log^{15}N$. 
We then argue that as $d^{(\sG_5)}_{\rm TV}(t)$ is the least failure probability of a coupling to $\pi_{\sC_5}$ at time $t$, 
\[
\begin{split}
d^{(\sG_5)}_{\rm TV}(t) \leq \p \left( T\leq t  \right) \frac{1}{2}\left\| \frac{\mathbbm{1}_{\sC_3}}{|\sC_3|}-\frac{\mathbbm{1}_{\sC_5}}{|\sC_5|} \right\|_1 + \p \left( T>t  \right)
&\leq
1-\frac{|\sC_3|}{|\sC_5|}+ \p \left( T>t  \right)\\
&\leq
1-\left(1-\frac{\delta}{2}\right)\rho
\end{split}
\]
on the high probability event $\{|\sC_1|\geq (1-\delta)\rho N\}$. By Assumption \ref{assumption_large_beta} we have $\rho>\nicefrac{1}{2}$, and so we have a mixing time on the same order as $t$ using \cite[Lemma 2.20]{aldous-fill-2014} to say
$
\de(s)\leq
\left(2\de(t)\right)^{\left\lfloor\nicefrac{s}{t}\right\rfloor}
$.

Note 
$t_{\rm rel}(\sG_5)
\leq
t_{\rm mix}(\sG_5)$
and complete the remaining edges internal to the vertex set $\sC_{\rm max}$ of this graph. Thus the resultant graph $\sG_{\rm max}$ is the SNR giant and by another application of \cite[Corollary 3.28]{aldous-fill-2014} these extra edges cannot slow relaxation time, and finally by \cite[Lemma 4.23]{aldous-fill-2014} we bound the mixing time on the order $O_{\mathbb{P}}\left( \log^{16} N \right)$. 
\end{proof}

\begin{figure}[hp]
\thisfloatpagestyle{empty}
\begin{center}
\hspace*{\fill}
\subfloat[$\sG_1$ is the Erd\H{o}s-R\'enyi core with some $H_\epsilon$ vertices removed]{\includegraphics[scale = .25]{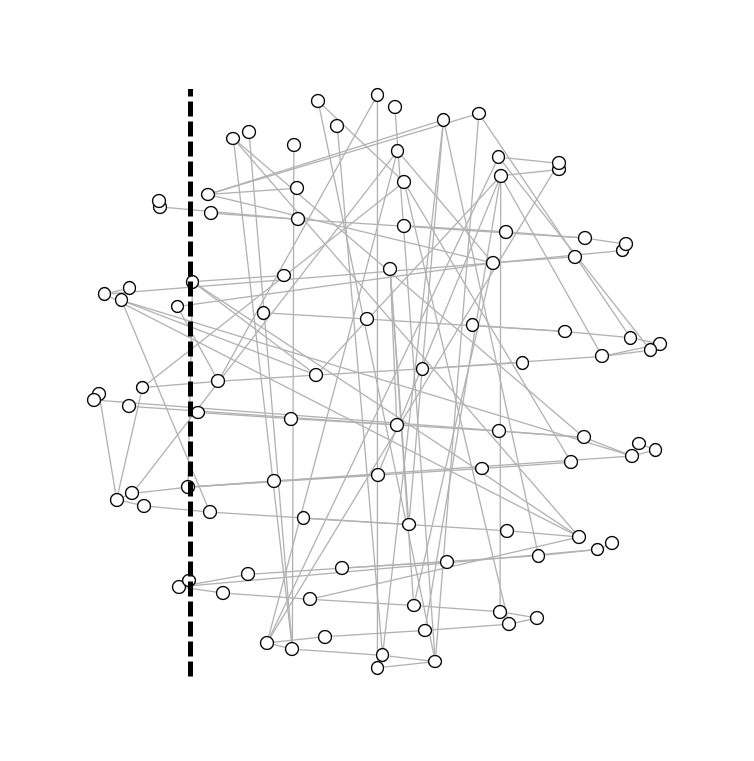}}
\hspace*{\fill} \\
\hspace*{\fill}
\subfloat[$\sG_2$ connects all of $H_\epsilon$]{\includegraphics[scale = .25]{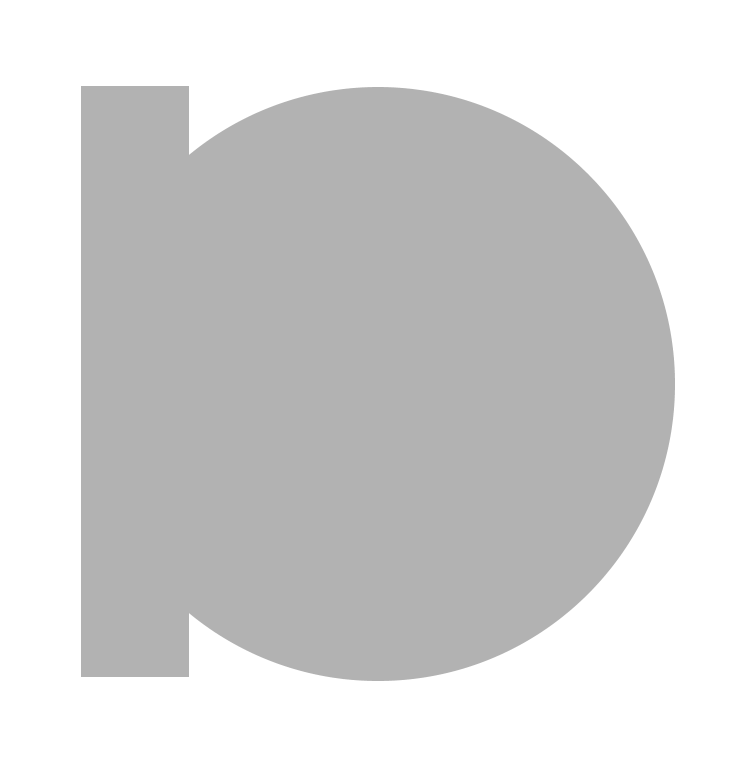}}
\hfill
\subfloat[$\sG_3$ completes internal edges]{\includegraphics[scale = .25]{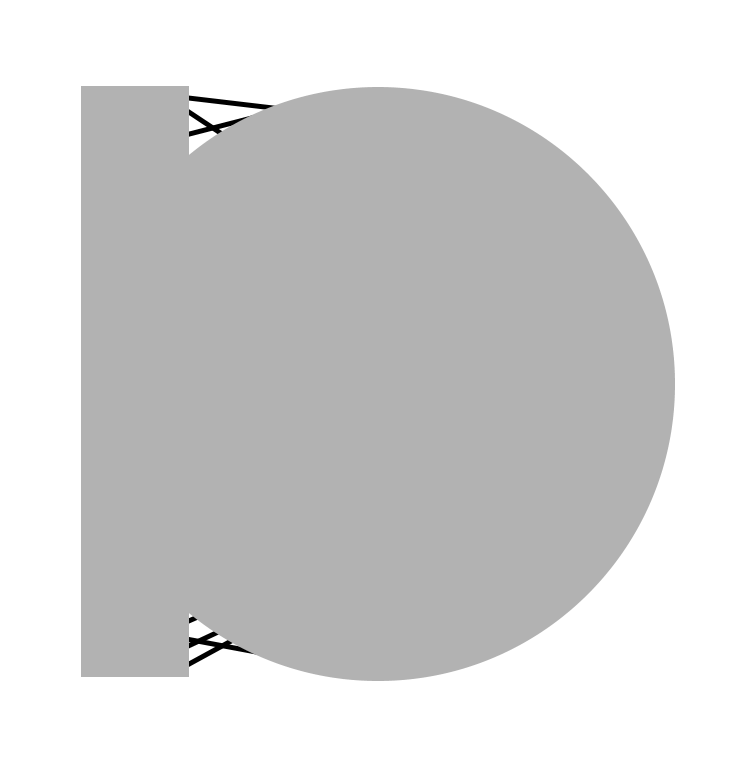}}
\hspace*{\fill} \\
\hspace*{\fill}
\subfloat[$\sG_4$ explores a ball]{\includegraphics[scale = .25]{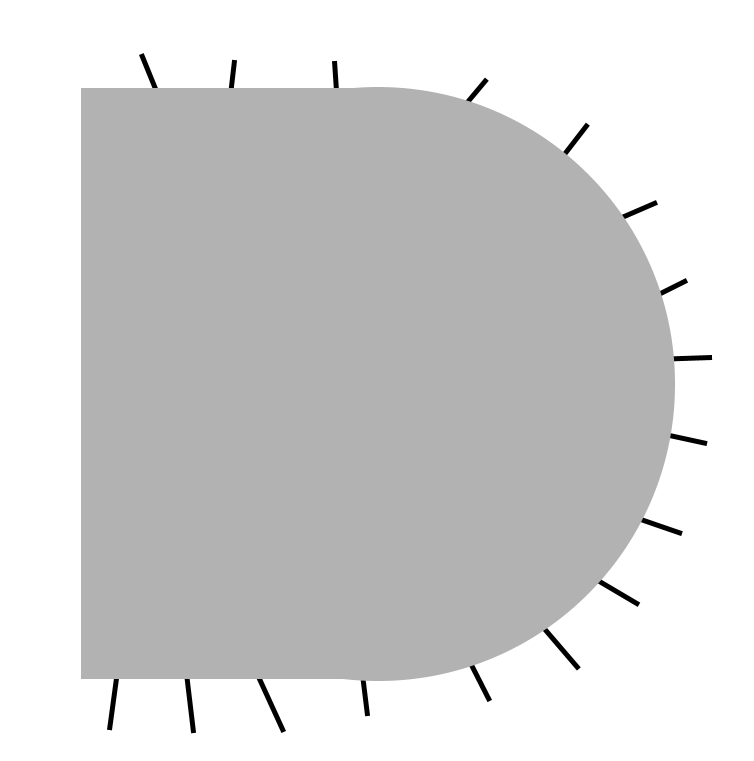}}
\hfill
\subfloat[$\sG_5$ completes the subcritical trees to span the full giant]{\includegraphics[scale = .25]{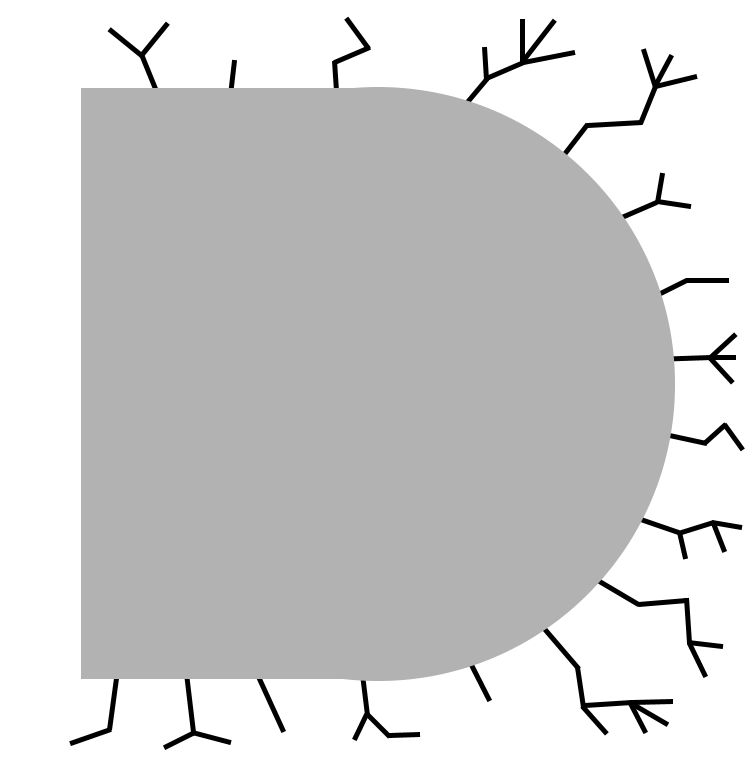}}  
\hspace*{\fill} \\
\caption{We sketch the layers of the spanning subgraph that we construct in this proof. The first layer, $\sG_1$, was necessary to have a fast-mixing core to build from; we aim to reach the point where the unexplored mass outside of the subgraph $\sG_4$ has $\sum_{v \notin \sG_4}\frac{\beta}{N}\left( \frac{N}{v} \right)^{2\gamma}<1$ so that the final growths will be subcritical. Further to keep these trees small, we must have bounded weights (in the sense of a $O^{\log N}(1)$ bound) which we achieve outside $\sG_2$. $\sG_3$ is a minor step to keep track of mixing, and then in $\sG_4$ we use the large $\beta$ assumption to check that we have definitely passed the point of subcritical explorations. The graph $\sG_5$ adds the promised subcritical trees which then spans the full giant $\sC_{\rm max}$.}\label{fig:subgraph_construction}
\end{center}
\end{figure}

We omitted two claims in the previous proof -- in the Appendix we check that $H_\epsilon$ was well connected to the Erd\H{o}s-R\'enyi giant, and here we show that with high probability every pendant tree was uniformly bounded by a polylogarithm.

\begin{lemma}\label{g4_subcrit}
In the above construction, the largest component of $\sG_5 \setminus \sG_3$ has size $O_{\mathbb{P}}\left( \log^3 N \right)$.
\end{lemma}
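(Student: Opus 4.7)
My plan is to show that each connected component of $\sG_5 \setminus \sG_3$ is a tree rooted at some $v \in \sC_4 \setminus \sC_3$, grown by the thinned Galton--Watson exploration described in the construction, and to bound its size via stochastic domination by a subcritical multi-type branching process. First I drop the ``previously-seen'' part of the thinning (keeping only the exclusion of labels in $\sC_4$), which only enlarges each tree. The resulting upper-bounding process is a multi-type Poisson branching process on types $y \in (0,1]$ with rank-one offspring kernel $K(x,y) = f(x)f(y)q(y)$, $f(y)=\sqrt{\beta}y^{-\gamma}$, where $q(y) := \mathbb{P}(\lceil Ny\rceil \notin \sC_4)$.

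For $y$ outside $H_\epsilon/N$ I decompose the event $\{\lceil Ny\rceil \notin \sC_4\}$ into (i) not being in the Erd\H{o}s--R\'enyi giant $\sC_1$, asymptotically of probability $1-\rho$, and (ii) having no MNR edge to $\sC_3$ given (i). Using $|\sC_1|\sim \rho N$ uniformly distributed on $[N]\setminus H_\epsilon$, the weight sum $\sum_{w \in \sC_3}f(w/N)\sim \rho N \sqrt{\beta}/(1-\gamma)$ (the $H_\epsilon$ contribution being negligible), and the identity $1-\rho = e^{-\beta\rho}$ to cancel the ``$-\rho N$'' term arising from the $\sum ((u/N)^{-\gamma}(w/N)^{-\gamma}-1)$ sum over the extra Poisson mass, one obtains the clean asymptotic
\[
q(y) = (1-\rho)^{y^{-\gamma}/(1-\gamma)}(1+o(1)).
\]
Being rank-one, $K$ has eigenfunction $f$ and principal eigenvalue
\[
\lambda = \int_0^1 f(y)^2 q(y)\,dy = \beta\int_0^1 y^{-2\gamma}(1-\rho)^{y^{-\gamma}/(1-\gamma)}\,dy,
\]
and I verify $\lambda < 1$ from Assumption \ref{assumption_large_beta} after exploiting the splitting $\tfrac{1}{1-\gamma} = \tfrac{\gamma(2-\gamma)}{1-\gamma} + (1-\gamma)$ of the exponent, combined with $\beta(1-\rho)<1$ from \eqref{eq_betarho} to absorb the prefactor $\beta$.

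With subcriticality established, I turn to tail bounds. Each root $v \in \sC_4 \setminus \sC_3$ lies outside $H_\epsilon$, so $f(v/N) = O(\log N)$, and its offspring count is Poisson with mean $f(v/N)\int fq = O(\log N)$, hence is at most $C\log N$ with probability $1-N^{-\omega(1)}$. Every depth-one vertex $w$ similarly satisfies $f(w/N) = O(\log N)$, and a standard generating-function analysis of the subcritical multi-type Poisson branching process yields $\mathbb{E}[e^{\theta |T_w|}] = O(1)$ for $\theta = c(1-\lambda)/\log N$ (the $\log N$ denominator being forced by the maximum per-vertex offspring mean), giving the exponential tail $\mathbb{P}(|T_w| > t) \leq e^{-ct/\log N}$. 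Taking $t = C\log^2 N$ makes the per-vertex failure probability $N^{-cC}$, summable over the $\leq N$ possible depth-one vertices for $C$ large enough.

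Combining the $O(\log N)$ bound on root offspring count with the $O(\log^2 N)$ bound on each sub-tree rooted at a depth-one vertex, each component has size $O(\log^3 N)$ with high probability, and a final union bound over the at most $N$ possible roots completes the proof. The main obstacle is verifying subcriticality: both the rigorous derivation of $q(y)$ (since the conditioning $\{\lceil Ny\rceil \notin \sC_1\}$ correlates the remaining MNR edges and requires appealing to the near-independence of the ER and extra Poissonian layers), and the algebraic matching of the eigenvalue integral with the precise form of Assumption \ref{assumption_large_beta}.
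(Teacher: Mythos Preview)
Your overall architecture---dominate each pendant tree by a branching process with labels in $\sC_4$ thinned, establish subcriticality, then extract an exponential tail---matches the paper's. Your tail argument via the Poisson MGF with $\theta\asymp 1/\log N$ is a clean alternative to the paper's route through the exploration random walk and Hoeffding's inequality; both exploit that the maximal vertex weight outside $H_\epsilon$ is $O(\log N)$ and arrive at the same $O(\log^3 N)$.

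The genuine gap is in your subcriticality step. Your asymptotic $q(y)=(1-\rho)^{y^{-\gamma}/(1-\gamma)}$ is correct, and so is the eigenvalue expression $\lambda=\beta\int_0^1 y^{-2\gamma}(1-\rho)^{y^{-\gamma}/(1-\gamma)}\,dy$. But the verification you sketch does not go through. Splitting the exponent as $\tfrac{1}{1-\gamma}=\tfrac{\gamma(2-\gamma)}{1-\gamma}+(1-\gamma)$ leaves you with the factor $\beta(1-\rho)^{(1-\gamma)y^{-\gamma}}$ to ``absorb''; this is maximised at $y=1$ where it equals $\beta(1-\rho)^{1-\gamma}$, and for $\gamma$ close to $1$ (e.g.\ $\beta=2$, $\gamma=0.9$, so $1-\rho\approx 0.203$) this is about $1.7>1$. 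So you cannot simply bound that factor by $1$ and invoke Assumption~\ref{assumption_large_beta} on the remaining integral. The exponent in your $\lambda$ is genuinely different from the exponent $\tfrac{\gamma(2-\gamma)}{1-\gamma}$ appearing in Assumption~\ref{assumption_large_beta}, and there is no pointwise inequality between the two integrands that lets you pass from one to the other after absorbing $\beta$.

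The paper does not compute $q(y)$ directly. Instead it upper-bounds the remaining offspring mass by a random variable $\mathfrak{M}$ built from a continuous-time label-drawing coupling: one runs clocks of rate $(N/v)^\gamma$ for time $(1-\delta)T$ with $T=\rho\beta\bigl(\tfrac{1}{1-\gamma}-1+\gamma\bigr)$, and declares $v$ unthinned if neither its clock has rung nor $v$ lies in a Bernoulli-$(1-\delta)\rho$ set $S$. The expectation of $\mathfrak{M}$ then converges to $\beta(1-\rho)\int_0^1 x^{-2\gamma}(1-\rho)^{\frac{\gamma(2-\gamma)}{1-\gamma}x^{-\gamma}}dx$, and now $\beta(1-\rho)<1$ from \eqref{eq_betarho} kills the prefactor cleanly, leaving exactly the integral of Assumption~\ref{assumption_large_beta}. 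This is why that assumption has the form it does: it is tailored to the continuous-time coupling, not to the direct $q(y)$ you compute. Your $\lambda<1$ may well hold throughout the parameter range of the theorem, but you would need a separate direct estimate to establish it---the algebraic reduction to Assumption~\ref{assumption_large_beta} that you propose does not work.
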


\begin{proof}
Each of these components is a tree by construction, and each gains exactly one edge from the construction of $\sG_4$.

Afterwards, we complete the exploration by the thinned Galton-Watson exploration of Lemma \ref{lemma_tree_exploration} where as we explore new vertices we only have more labels to thin from the future exploration. Hence, we can simplify the exploration by stochastically containing every pendant tree in a tree with just labels in $\sC_4$ thinned (these trees are then i.i.d.).

To be clear, we explore such a tree in the following way.
Generate a random label $M \in [N]$ with $\mathbb{P}(M=k) \propto k^{-\gamma}$, the effective weight is then only positive if $M \notin \sC_4$ and so is the following function of $M$:
\[
W (M) =
\begin{cases}
\beta N^{2\gamma-1} M^{-\gamma} \sum_{k=1}^N k^{-\gamma}
& M \notin \sC_4 \\
0 & M \in \sC_4. \\
\end{cases}
\]

Offspring have the mixed Poisson distribution $D \sim \operatorname{Pois} \left( W \right)$ and thus we have a Galton-Watson tree. Note that rather than removing thinned vertices we have given them zero weight and hence no children -- for the purposes of containing the tree this is sufficient.

Think of this network exploration in continuous time such that each vertex $i \in [N]$ is revealed to be the next unrevealed vertex (say, in the breadth-first order) as a Poisson process of rate $\left(\nicefrac{N}{i}\right)^{\gamma}$. Thus all times after the first for each Poisson process will represent thinned vertices in the tree construction. Note also that the total exploration rate is
\begin{equation}\label{eq_explore_rate}
\sum_{i=1}^N 
\left(\frac{N}{i}\right)^{\gamma}
\sim
\frac{N}{1-\gamma}.
\end{equation}

$\sG_4$ was constructed as the ball around $\sC_3=\sC_1\cup H_\epsilon$, where around $\sC_1$ this is with the remaining kernel $\beta (x^{-\gamma}y^{-\gamma}-1)$ and around $H_\epsilon \setminus V(\sG_1)$ we have the full kernel $\beta x^{-\gamma}y^{-\gamma}$. In the MNR model the number of edges in this ball is a single concentrated Poisson variable and so we build this ball by exploring edges numbering at least
\[
\begin{split}
&(N+o_{\mathbb{P}}(N))
\int_0^1 \int_0^1 
\rho \beta (x^{-\gamma}y^{-\gamma}-1)
{\rm d}x {\rm d}y\\
=&(N+o_{\mathbb{P}}(N))
\rho \beta \left( \frac{1}{(1-\gamma)^2}-1 \right),
\end{split}
\]
note that still each of these edges is thinned if it finds a repeated label. Attaching these labels in continuous time at the rate \eqref{eq_explore_rate} then takes time $T+o_{\mathbb{P}}(1)$ for
\[
T:=
\rho \beta \left( \frac{1}{1-\gamma}-1+\gamma \right).
\]

We can therefore take some small $\delta>0$ and thin from $[N]\setminus \sC_1$ in continuous time for time $(1-\delta)T$, and on the high probability event that the continuous time exploration takes more than time $(1-\delta)T$ this is an upper bound on the remaining mass by stochastic domination.

Each unthinned vertex has a contribution to the offspring mean of the ongoing Galton-Watson exploration given by
\[
\mathbb{P}(M=v)w(v)=
\frac{v^{-\gamma}}{\sum_{k=1}^N k^{-\gamma}}
\sum_{k=1}^N \beta N^{2\gamma-1} v^{-\gamma} k^{-\gamma}
= \frac{\beta}{N}
\left( \frac{N}{v} \right)^{2\gamma}.
\]

Define also a high probability lower bound on the set $\sC_1$, which is just a set $S$ including each vertex from $[N]$ independently with probability $(1-\delta)\rho$. Then on both high probability assumptions we have mass at time $(1-\delta)T$ stochastically dominated by
\[
\mathfrak{M}
=
\sum_{v=1}^N
\frac{\beta}{N}
\left( \frac{N}{v} \right)^{2\gamma}
\mathbbm{1}_{E_v>(1-\delta)T}
\mathbbm{1}_{v \notin S}
\]
where each $E_v$ is an independent exponential of rate $\left(\nicefrac{N}{v}\right)^{\gamma}$. We can bound this variable with the second moment method, and so we calculate
\[
\begin{split}
\operatorname{Var}\left(
\mathfrak{M}
\right)
&=\sum_{v=1}^N
\frac{\beta^2}{N^2}
\left( \frac{N}{v} \right)^{4 \gamma}
\left(1-e^{- (1-\delta)T\left( \frac{N}{v} \right)^{\gamma}}\right)e^{- (1-\delta)T\left( \frac{N}{v} \right)^{\gamma}}(1-\delta)\rho(1-(1-\delta)\rho)\\
&\leq \sum_{v=1}^N
\frac{\beta^2}{N^2}
\left( \frac{N}{v} \right)^{4 \gamma}
e^{- (1-\delta)T\left( \frac{N}{v} \right)^{\gamma}}
\sim \frac{1}{N} \int_0^1 \beta^2 x^{-4\gamma} e^{- (1-\delta)T x^{-\gamma}} {\rm d} x
\rightarrow 0 \text{ as } N \rightarrow \infty
\end{split}
\]
because the integrand is bounded. Hence $\mathfrak{M}$ converges in probability to its mean:
\[
\begin{split}
\mathbb{E}(\mathfrak{M})
&=
\sum_{v=1}^N
\frac{\beta}{N}
\left( \frac{N}{v} \right)^{2\gamma}
e^{- (1-\delta)T \left( \frac{N}{v} \right)^{\gamma}}
(1-(1-\delta)\rho)\\
&\rightarrow
\beta(1-(1-\delta)\rho)
\int_0^1
x^{-2\gamma}
e^{-(1-\delta)T x^{-\gamma}}
{\rm d} x\\
&<
\int_0^1
x^{-2\gamma}
e^{-(1-\delta)T x^{-\gamma}}
{\rm d} x \text{ for small $\delta$ by \eqref{eq_betarho}. }
\end{split}
\]

This integral is less that $1$, recalling that $1-\rho=e^{-\beta \rho}$ and taking $\delta$ small enough, by Assumption \ref{assumption_large_beta}.

So now that we have found some $m<1$ such that with high probability $\mathfrak{M}<m$, we can analyse the trees defined by $\mathfrak{M}$. To each pendant tree $T_i$ we associate an exploration walk $X_i : \mathbb{N}^+ \rightarrow \mathbb{N}$ as in \cite{alon2004probabilistic} which represents the number of unexplored half-edges in the tree if we explore in the breadth-first ordering. Thus $X_1=1$, increments have the i.i.d. distribution
\[
\forall t \geq 1, \quad X_i(t+1)-X_i(t) \stackrel{({\rm d})}{=} D-1
\]
and the first hitting time of $0$ is the size of the tree, $X_i(|T_i|)=0$. Therefore if we fix some large constant $C>0$ and $L=\left\lceil C \log^3 N \right\rceil$, and write $W_i \stackrel{\rm i.i.d.}{\sim}W$
\[
\begin{split}
\mathbb{P}\left( |T_i| \geq L \right)
&\leq
\mathbb{P}\left( X_i \left( L  \right) \geq 0 \right)\\
&=\mathbb{P}\left( \sum_{i=1}^L \operatorname{Pois} \left( W_i \right) \geq L \right)
=\mathbb{P}\left(  \operatorname{Pois} \left( \sum_{i=1}^L W_i \right) \geq L \right)\\
&\leq \mathbb{P}\left(\frac{1}{L} \sum_{i=1}^L W_i \geq \frac{1+m}{2} \right)
+\mathbb{P}\left( \operatorname{Pois} \left(  \left( \frac{1+m}{2} \right) L \right) \geq L \right).\\
\end{split}
\]

Note that the second term is a Poisson large deviation and so has exponential decay in $L$. For the first, we observe that $W$ is a bounded random variable
\[
W \in \left[
0,
\frac{\beta}{1-\gamma} \log N
\right] \quad \text{a.s.}
\]
which will allow us to use Hoeffding's inequality \cite{hoeffding1963} to find
\[
\mathbb{P}\left(\frac{1}{L} \sum_{i=1}^L W_i \geq \frac{1+m}{2} \right)
=
\mathbb{P}\left(\frac{1}{L} \sum_{i=1}^L W_i - m \geq \frac{1-m}{2} \right)
\]
\[
\leq
\exp \left( -
\frac{2 L^2 \left( \frac{1-m}{2} \right)^2}{L \left( \frac{\beta}{1-\gamma} \log N \right)^2}
\right)
\leq
\exp \left( -
\frac{2 C \left( \frac{1-m}{2} \right)^2}{ \left( \frac{\beta}{1-\gamma}  \right)^2} \log N
\right).
\]

So if we set $C$ large enough that
\[
\frac{2 C \left( \frac{1-m}{2} \right)^2}{ \left( \frac{\beta}{1-\gamma}  \right)^2}>1
\]
then no exploration will see $L$ vertices with high probability, by the union bound.
\end{proof}

\section{Discursive Voter Models}

In this final section we put together the previous results to prove Theorems \ref{theorem_disc_cons_small} and \ref{theorem_disc_cons_ultrasmall}.

The VSRW Markov chain is the dual of the discursive voter model with $\theta=1$. Fortunately, controlling the mixing of this dynamic induces a bound over every $\theta \geq 1$ version.

\begin{proposition}[{\cite[Corollary 3.28]{aldous-fill-2014}} ]\label{prop_mix_monotone}
On any connected graph the discursive dual relaxation time $t_{\rm rel}^{(\theta)}$ is non-increasing in $\theta$.
\end{proposition}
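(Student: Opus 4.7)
The plan is to recognise Proposition \ref{prop_mix_monotone} as a direct application of the Dirichlet form (conductance) monotonicity principle, which is the content of \cite[Corollary 3.28]{aldous-fill-2014}. The starting point would be to write out the rate matrix of the dual chain given by \eqref{eq_dual_dynamic},
\[
Q^{(\theta)}(i,j) = \frac{\de(i)^{\theta-1} + \de(j)^{\theta-1}}{2} \mathbbm{1}_{i \sim j},
\]
and observe that, because the rates are symmetric in $i,j$, the walk is reversible with respect to the uniform distribution $\pi \equiv 1/N$ regardless of the value of $\theta$. In particular the associated edge conductances $c^{(\theta)}(ij) = \pi(i) Q^{(\theta)}(i,j)$ inherit their $\theta$-dependence directly from the rates, while $\pi$ itself is fixed.

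The crux of the argument is the following elementary monotonicity: since the graph is connected, every vertex satisfies $\de(v) \geq 1$, so $\log \de(v) \geq 0$ and hence the scalar map $\theta \mapsto \de(v)^{\theta-1}$ is non-decreasing on $\mathbb{R}$. It follows at once that every conductance $c^{(\theta)}(ij)$ is pointwise non-decreasing in $\theta$.

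Combining with the variational characterisation of the spectral gap,
\[
\frac{1}{t_{\rm rel}^{(\theta)}} = \inf_{f : \operatorname{Var}_\pi f > 0} \frac{\cE^{(\theta)}(f,f)}{\operatorname{Var}_\pi f},
\qquad
\cE^{(\theta)}(f,f) = \frac{1}{2}\sum_{i \sim j} c^{(\theta)}(ij) \bigl( f(i) - f(j) \bigr)^2,
\]
and the fact that $\operatorname{Var}_\pi f$ is independent of $\theta$, we get that the Rayleigh quotient is pointwise non-decreasing in $\theta$, hence so is its infimum, yielding the claim. The only subtlety worth noting is precisely the use of connectedness to rule out degree-zero vertices (which would make $\de(v)^{\theta-1}$ ill-behaved and could invert the monotonicity); since this is immediate from the hypothesis, no real obstacle arises.
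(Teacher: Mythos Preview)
Your proposal is correct and is exactly the intended argument: the paper gives no separate proof here, treating the proposition as an immediate consequence of \cite[Corollary 3.28]{aldous-fill-2014}, and your write-up correctly supplies the two ingredients needed to invoke that corollary---namely that the uniform stationary measure is $\theta$-independent and that the edge conductances $c^{(\theta)}(ij)$ are pointwise non-decreasing in $\theta$ (using connectedness to ensure $\de(v)\geq 1$). There is nothing to add.
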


\begin{proof}[Proof of Theorems \ref{theorem_disc_cons_small} and \ref{theorem_disc_cons_ultrasmall}]\,


{\bf Case 1:} We first consider $\theta  \leq \frac{1}{\gamma}$.

In either theorem's conditions we have at least $\beta>(1-2\gamma)\vee 0$. With high probability, Lemma \ref{mypaths} gives a set of paths connecting every pair in
\begin{equation*}
S:=
\left\lbrack \left\lfloor
N
\log^{\nicefrac{-2}{\gamma}} N
\right\rfloor \right\rbrack
\end{equation*}
by paths in $[N]\setminus S$, of maximal length $O_{\mathbb{P}}(\log N)$.

The diameter result in Theorem \ref{my_diameter_bound} gives a set of paths between every pair of vertices in $\sC_{\rm max}$, of maximal length $O_{\mathbb{P}}(\log N)$.

Therefore we can alter paths in the second set which have more than $2$ vertices in $S$, by taking the first and last vertex in $S$, deleting the path between them and replacing it by their low degree path from the first set.
From Lemma \ref{lemma_log_degrees} we have 
\[
\max_{v> N \log^{\nicefrac{-2}{\gamma}}N} \de(v)=O_{\mathbb{P}}^{\log N}(1)
\]
to say that the conductance of any edge $\{v,w\}$ incident to a vertex $v \notin S$ has
\[
c(v,w)
\geq
\frac{1}{\left| \sC_{\rm max} \right|} \frac{\de(v)^\theta}{2}
\geq
\frac{1}{N} \frac{\de(v)^\theta}{2}
\]
and so uniformly bound the expected hitting of each pair on the order $O_{\mathbb{P}}^{\log N}(N)$ via the electrical network bound of Proposition \ref{prop:max_resistance}. This induces a bound on the meeting time by \cite[Proposition 14.5]{aldous-fill-2014} and then the required $O_{\mathbb{P}}^{\log N}(N)$ bound on the expected consensus time by Proposition \ref{prop:coal}.

Further we have an upper bound on the small components, which by \cite[Theorem 3.12(ii)]{bollobas2007phase} have maximal size $O_{\mathbb{P}}(\log N)$, by the same hitting time argument -- using that the maximum degree among these components is bounded by the maximum component size.

The lower bound follows by an application of Proposition \ref{lower_meeting_bound} to $\sC_{\rm max}$. This simplifies, because the stationary distribution is uniform on $\sC_{\rm max}$, to $\Omega_{\mathbb{P}}(N^2/\sum_v q(v))$. Given $\theta \leq {\nicefrac{1}{\gamma}}$ we find (recalling also the degree approximation \eqref{eq_degree_handwaving})
\[
\sum_{v \in \sC_{\rm max}}q(v)
=
\sum_{v \in \sC_{\rm max}}\de(v)^\theta
=
O_{\mathbb{P}}^{\log N}\left(
\sum_{v=1}^N
\left(
\frac{N}{v}
\right)^{\gamma \theta}
\right)
=
O_{\mathbb{P}}^{\log N}(N)
\]
and so we have a lower bound which is polylogarithmically tight to the upper bound
\[
(4+o(1)) \, 
t_{\text{\textnormal{meet}}}^{\pi} \geq
\frac{N^2}{\sum_v q(v)}
=\Omega_{\mathbb{P}}^{\log N}(N).
\]


{\bf Case 2:} Next we address $\frac{1}{\gamma}<\theta \leq 2$.

In this case the lower bound of Proposition \ref{lower_meeting_bound} uses
\[
\sum_{v \in \sC_{\rm max}}q(v)
=
\sum_{v \in \sC_{\rm max}}\de(v)^\theta
=
O_{\mathbb{P}}^{\log N}\left(
\sum_{v=1}^N
\left(
\frac{N}{v}
\right)^{\gamma \theta}
\right)
=
O_{\mathbb{P}}^{\log N}\left(N^{\gamma \theta}\right)
\]
and so we instead find the lower bound  $t_{\text{\textnormal{meet}}}^{\pi} 
=\Omega^{\log N}_{\mathbb{P}}\left(N^{2-\gamma \theta}\right).$

For the upper bound, we will need a partially observed version of the chain (as defined in Definition \ref{define_partial_obs}) using the set of leaf neighbours of the vertex $1 \in \sC_{\rm max}$
\[
L_1:=
\left\{
v \in [N]:
v \sim 1,
\de(v)=1
\right\}.
\]

Let $(X^{(1)}_t)_t$, $(X^{(2)}_t)_t$ denote two i.i.d. Markov chains with the dual dynamics.
Then for some constant $C>0$ define a timeframe 
\[
T=C N^{2-\gamma\theta}t_{\rm rel}
\]
and we control the occupation time of the two independent walkers in $L_1 \times L_1$:
\[
\sigma_{L_1 \times L_1}(t):=\int_0^t \mathbbm{1}_{(X^{(1)}_s,X^{(2)}_s) \in L_1 \times L_1} {\rm d}s.
\]

Write
\[
\mathbb{P}_{\pi \otimes \pi}\left(
\sigma_{L_1 \times L_1}(T)
<
\frac{T \pi\left( L_1 \right)^2}{2}
\right)
=\mathbb{P}_{\pi \otimes \pi}\left(
\frac{1}{T}
\int_0^T\left(
\pi\left( L_1 \right)^2-\mathbbm{1}_{L_1 \times L_1} \left( X^{(1)}_t,X^{(2)}_t \right)
\right){\rm d}t
>
\frac{\pi\left( L_1 \right)^2}{2}
\right).
\]
and then apply \cite[Remark 1.2]{lezaud_chernoff} with
\[
f:=\pi\left( L_1 \right)^2-\mathbbm{1}_{L_1 \times L_1}
\]
(and hence 
$
\left\|
f
\right\|^2_{2,\pi}
\sim
\pi\left( L_1 \right)^2
$) 
to find
\begin{equation}\label{eq_chernoff}
\mathbb{P}_{\pi \otimes \pi}\left(
\frac{1}{T}
\int_0^T f
> \frac{\pi\left( L_1 \right)^2}{2}
\right)
\leq
\exp
\left(
-(1+o(1))\frac{T \pi\left( L_1 \right)^2}{32 t_{\rm rel}}
\right).
\end{equation}

Note also by exploring the network from vertex $1$ and considering only edges into the smaller half of the vertices by weight, that
\[
\left|
L_1
\right|
\succeq
\operatorname{Bin}
\left(
\left\lfloor \nicefrac{N}{2} \right\rfloor,
(1+o(1)) \, 
\beta N^{\gamma-1}
e^{-\frac{\beta 2^\gamma}{(1-\gamma)}}
\right)
\]
and so, given $\beta>1$, we can guarantee $\pi\left( L_1 \right) \geq
\tfrac{1}{3}  N^{\gamma-1}
e^{-\frac{\beta 2^\gamma}{(1-\gamma)}}$ with high probability. 
Using also that $\theta \leq 2$ we can bound \eqref{eq_chernoff} with high probability by
\[
\exp
\left(
-\frac{T \pi\left( L_1 \right)^2}{33 t_{\rm rel}}
\right)
\leq
\exp
\left(
-\frac{C e^{-\frac{\beta 2^{1+\gamma}}{(1-\gamma)}}}{99 }
N^{2-\gamma\theta} N^{2\gamma-2}
\right)
\leq
\frac{1}{3}
\]
for some large constant $C$ and large enough $N$.

We then consider the partially observed coalescence dynamics which are conveniently simple. If $\pi$ denotes as usual the uniform measure on the giant $\sC_{\rm max}$, then
\[
\mathbb{P}_{\pi \otimes \pi}\left(
\sigma_{L_1 \times L_1}(\tau_{\rm meet})>t
\right)
=
\left(
1-\frac{1}{|L_1|}
\right)
e^{-\tfrac{2t}{|L_1|}\left( \tfrac{\de(1)^{\theta-1}+1}{2} \right)}
\]
because they are initially coincident with probability $\nicefrac{1}{|L_1|}$, and from then $\tfrac{\de(1)^{\theta-1}+1}{2}$ is the constant rate of meeting for the partially observed walkers. In particular, this means
\[
\mathbb{P}_{\pi \otimes \pi}\left(
\sigma_{L_1 \times L_1}(\tau_{\rm meet})>T
\right)
=e^{-\Theta_{\mathbb{P}}^{\log N}\left(
N^{2-2\gamma}
\right)}
=o_{\mathbb{P}}(1)
\]
and so have the required occupancy event of \eqref{eq_chernoff} and moreover observe a meeting before time $T$, with probability at least $\nicefrac{1}{2}$. Hence, by restarting after failure to meet, we expect to see meeting before time $2T$. 
We conclude
\[
2T=  O_{\mathbb{P}}^{\log N}\left(
N^{2-\gamma \theta}
\right)
\]
by the bound of Theorem \ref{theorem_erw_mixing} on the relaxation time which applies to every $\theta >\nicefrac{1}{\gamma}$ by Proposition \ref{prop_mix_monotone}, and finally Proposition \ref{prop:coal} shows that coalescence time on the network is logarithmically comparable to slowest expected meeting.
\end{proof}

{\bf Acknowledgements.}
JF was supported by a scholarship from the EPSRC Centre for Doctoral Training in Statistical Applied Mathematics at Bath (SAMBa), under the project number EP/L015684/1, then by the \emph{Unit\'e de math\'ematiques pures et appliqu\'es} of ENS Lyon, and now by NKFI grant KKP 137490.

\printbibliography

\begin{appendices}
\section{Appendix}

The following integral condition is directly what we need for subcriticality of the pendant trees in the construction of the proof of Theorem \ref{theorem_erw_mixing}, but we want to find simpler parameter regions for the resultant main theorems.

\begin{proposition}\label{prop_beta_3}
For the unique $\rho \in (0,1)$ satisfying $1-\rho=e^{-\beta \rho}$, the integral condition
\[
I(\beta):=\int_0^1
(1-\rho)^{\left( \frac{1}{1-\gamma}-1+\gamma \right) x^{-\gamma}}
x^{-2\gamma}
{\rm d} x<1
\]
is guaranteed by $\beta\geq 3$. 
\end{proposition}

\begin{proof}
First we change variables $u=x^{-\gamma}$ and observe that this integral is a product of two decreasing functions of $u$
\[
\begin{split}
I(\beta)
&=
\int_1^\infty
\frac{u^{1-\frac{1}{\gamma}}}{\gamma}
(1-\rho)^{\left( \frac{1}{1-\gamma}-1+\gamma \right) u}
{\rm d} u\\
&\leq
\int_1^\infty
\frac{1}{\gamma}
(1-\rho)^{\left( \frac{1}{1-\gamma}-1+\gamma \right) u}
{\rm d} u
\wedge
\int_1^\infty
\frac{u^{1-\frac{1}{\gamma}}}{\gamma}
(1-\rho)^{\left( \frac{1}{1-\gamma}-1+\gamma \right)}
{\rm d} u.
\end{split}
\]

These are then straightforward to calculate:
\[
\int_1^\infty
\frac{(1-\rho)^{\left( \frac{1}{1-\gamma}-1+\gamma \right) u}}{\gamma}
{\rm d} u
=
\frac{-(1-\rho)^{ \frac{1}{1-\gamma}-1+\gamma}}{\gamma\left( \frac{1}{1-\gamma}-1+\gamma \right) \log (1-\rho) }
<
\frac{1}{\gamma\left( \frac{1}{1-\gamma}-1+\gamma \right) \log \left( \frac{1}{1-\rho} \right) };
\]
and if $\gamma<\frac{1}{2}$
\[
\int_1^\infty
\frac{u^{1-\frac{1}{\gamma}}}{\gamma}
(1-\rho)^{\left( \frac{1}{1-\gamma}-1+\gamma \right)}
{\rm d} u
=
\frac{(1-\rho)^{\left( \frac{1}{1-\gamma}-1+\gamma \right)}}{1-2\gamma}.
\]

Rearranging, we have $I(\beta)<1$ if either
\begin{equation}\label{eq_I_ineq}
\log \left( \frac{1}{1-\rho} \right)>\frac{1}{\gamma\left( \frac{1}{1-\gamma}-1+\gamma \right) }
\end{equation}
or $\gamma<\frac{1}{2}$ and
\[
\log \left( \frac{1}{1-\rho} \right)>
\frac{-\log \left(1-2\gamma\right)}{\frac{1}{1-\gamma}-1+\gamma}.
\]

We check that the first bound is strictly decreasing in $\gamma$ and the second is strictly increasing, and so we can set $\gamma_0=0.45$ and calculate
\[
\frac{1}{\gamma_0\left( \frac{1}{1-\gamma_0}-1+\gamma_0 \right)}
\wedge
\frac{-\log \left(1-2\gamma_0\right)}{\frac{1}{1-\gamma_0}-1+\gamma_0}
\approx
1.752 \dots 
\wedge
1.816 \dots 
\]

Rearranging again, we conclude that this is satisfied when
\[
\rho>1-e^{-1.816 \dots}
\]
i.e.
\[
\beta=\frac{1}{\rho}\log \frac{1}{1-\rho}>
\frac{1.816\dots}{1-e^{-1.816\dots}}
=2.169\dots
\]
\end{proof}

\begin{proposition}\label{prop_beta_2log}
For the unique $\rho \in (0,1)$ satisfying $1-\rho=e^{-\beta \rho}$, the integral condition
\[
I(\beta):=\int_0^1
(1-\rho)^{\left( \frac{1}{1-\gamma}-1+\gamma \right) x^{-\gamma}}
x^{-2\gamma}
{\rm d} x<1
\]
is guaranteed, if $\gamma \geq \nicefrac{1}{2}$, by $\beta> 2\log 2$.
\end{proposition}

\begin{proof}
Given the assumption $\rho>\nicefrac{1}{2}$ which is equivalent to $\beta> 2\log 2$, the previous integral $I(\beta)$ is bounded by
\[
\int_0^1
f(\gamma,x)
{\rm d} x,
\qquad \text{where} \qquad
f(\gamma,x):=
2^{-\left( \frac{1}{1-\gamma}-1+\gamma \right) x^{-\gamma}}
x^{-2\gamma}.
\]

We show that this integrand $f$ is decreasing in $\gamma$ at all relevant $x,\gamma$
\begin{equation}\label{eq_leapoffaith}
\begin{split}
(1-\gamma)^2 2^{\frac{(2-\gamma) \gamma x^{-\gamma}}{1-\gamma}} x^{3 \gamma}
\frac{\partial f}{\partial \gamma}(\gamma,x)&=2 (1-\gamma)^2 x^\gamma \log \frac{1}{x}\\
&+  \gamma \left(3\gamma -\gamma^2 -2\right)\log (2)\log \frac{1}{x}\\
&+\left(2\gamma-\gamma^2  -2\right)\log (2).
\end{split}
\end{equation}

Then by finding stationary points of the pieces:
\[
\sup_{\gamma \in (\nicefrac{1}{2},1)}
\sup_{x \in (0,1)} 2 (1-\gamma)^2 x^\gamma \log \frac{1}{x}
=
\sup_{\gamma \in (\nicefrac{1}{2},1)}\frac{2(1-\gamma)^2}{\gamma e}
=\frac{1}{e};
\]
\[
\sup_{\gamma \in (\nicefrac{1}{2},1)}\left(3\gamma -\gamma^2 -2\right)=0;
\]
\[
\sup_{\gamma \in (\nicefrac{1}{2},1)}\left(2\gamma -\gamma^2 -2\right)=-1.
\]

Thus we bound the right hand side of \eqref{eq_leapoffaith} by $\frac{1}{e}-\log 2<0$, and so the integral also is strictly decreasing in $\gamma$. We can bound it by what we recognise as an incomplete gamma function $\Gamma$
\[
\int_0^1
f(\gamma,x)
{\rm d} x
<
\int_0^1
f\left(\frac{1}{2},x\right)
{\rm d} x
=
\int_0^1
\frac{2^{-\nicefrac{3}{2\sqrt{x}}}}{x}
{\rm d} x=2\Gamma\left(0,\frac{3 \log[2]}{2}\right)\approx 0.41
\]
in particular this integral is less than $1$.
\end{proof}

The giant component regime on Norros-Reittu networks is widely accepted folklore, and a proof is suggested in \cite[Remark 2.4]{bollobas2007phase}, but for unambiguity and for illustration we will adapt one from the results on Chung-Lu type networks in that work.

\begin{proposition}\label{prop_rankone_snr_giant}
The Norros-Reittu models of Defintions \ref{def_mnr} and \ref{def_gsnr} have a giant component if
\[
\int_0^1 \operatorname{f}^2(x) {\rm d}x\in (1,\infty ] .
\]
\end{proposition}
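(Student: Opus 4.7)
The plan is to reduce to the finite-type Chung-Lu setting via the same step-function truncation used in the proof of Theorem~\ref{my_diameter_bound}, and then invoke \cite[Theorem~3.1]{bollobas2007phase}. Note first that the component structure of the SNR graph agrees with that of the MNR multigraph (flattening multi-edges and deleting loops does not merge or split components), so it suffices to exhibit a giant component in the MNR version $\mathcal{N}f$.

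After reordering to assume $f$ is nonincreasing without loss of generality, I would approximate $f$ from below by the finitely-typed step function $f_M(x) := f(\lceil Mx\rceil/M)$, giving the stochastic edge domination $\mathcal{N}f_M \preceq \mathcal{N}f$. By monotone convergence,
\[
\int_{M^{-1}}^1 f_M^2 \;\longrightarrow\; \int_0^1 f^2 \;>\; 1,
\]
so one can fix $M$ large enough that the truncated kernel is strictly supercritical and of finite type. (In the case $\int f^2 = \infty$, the same monotone convergence argument still produces an $M$ with $\int f_M^2 > 1$, on the truncated support bounded away from $0$.)

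For this finite-type, bounded kernel, MNR and the Chung-Lu graph with edge probabilities $p_{ij} = 1\wedge \tfrac{1}{N} f_M(i/N)f_M(j/N)$ are asymptotically equivalent by \cite[Theorem~6.18]{van2016random}, and the kernel is irreducible on its support (a rank-one product kernel always is). Then \cite[Theorem~3.1]{bollobas2007phase} yields a unique giant component in the Chung-Lu version of $\mathcal{N}f_M$, of size $\Theta_{\mathbb{P}}(N)$. Transporting this through the asymptotic equivalence gives a giant component in $\mathcal{N}f_M$, and by the stochastic domination $\mathcal{N}f_M \preceq \mathcal{N}f$ the same component is contained in a component of $\mathcal{N}f$, proving the proposition for the multigraph and hence the simplified version.

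The main obstacle is conceptual rather than technical: ensuring that the truncation to $f_M$ really is both a \emph{subgraph} of $\mathcal{N}f$ (which uses the nonincreasing rearrangement and the inequality $\lceil Mx\rceil/M \geq x$) and simultaneously \emph{supercritical}. For unbounded $f$ one has to be careful that the truncation does not discard too much mass near $x=0$, but monotone convergence on the truncated support handles this. No other step is delicate: the equivalence of MNR, Chung-Lu, and SNR for bounded kernels, and \cite[Theorem~3.1]{bollobas2007phase}, are then applied essentially off the shelf.
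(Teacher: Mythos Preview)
Your argument is correct and in fact more streamlined than the paper's own proof. The paper splits into two cases: when $\int_0^1 f^2 = \infty$, it shows the SNR edge probabilities dominate a $(1-\epsilon)$-percolated Chung-Lu model (via the elementary bounds $1-e^{-x} > x(1-x)$ for small $x$ and a direct comparison for large $x$) and then applies \cite[Corollary~3.3]{bollobas2007phase}; when $1 < \int_0^1 f^2 < \infty$, it instead embeds the Generalised Random Graph with $p_{ij} = \frac{f(i/N)f(j/N)}{N + f(i/N)f(j/N)}$, invokes \cite[Theorems~6.10 and~6.15]{van2016random} to identify this as uniform conditional on its degrees, and finishes with the configuration-model criterion \cite[Theorem~1]{bollobas2015old} through a size-biased mean computation.

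Your route handles both cases at once by truncating to the bounded, finitely-typed $f_M$ exactly as in the proof of Theorem~\ref{my_diameter_bound}, so that \cite[Theorem~3.1]{bollobas2007phase} applies directly regardless of whether $\int f^2$ is finite. This is cleaner, reuses machinery already present in the paper, and avoids the configuration-model detour entirely. Two minor remarks: what you call ``monotone convergence'' is really the convergence of lower Riemann sums for a nonincreasing nonnegative function to its (possibly infinite) integral, which is elementary but not literally the monotone convergence theorem; and the asymptotic equivalence with Chung-Lu from \cite[Theorem~6.18]{van2016random} is for the \emph{simplified} Norros--Reittu graph rather than the multigraph, though as you already noted their component structures coincide, so this costs nothing.
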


\begin{proof}
If $\int \operatorname{f}^2=\infty$ then take $\epsilon \in (0,1)$ such that $\epsilon=e^{-\epsilon}$. 
We have the bound for any $x>0$
\[
1-e^{-x}>x(1-x)
\]
so that for $i,j \in [N]$ such that $w_{ij}:=\tfrac{1}{N}\operatorname{f}\left(\tfrac{i}{N},\tfrac{j}{N}\right)<\epsilon$ we can infer for the SNR edge probabilities
\[
p_{ij}=1-\exp \left( -w_{ij} \right)
>(1-\epsilon) w_{ij}
.
\]
Otherwise, if $w_{ij}\geq\epsilon$, we have instead
\[
p_{ij}=1-\exp \left( -w_{ij}\right)
 \geq 1-\exp \left( -\epsilon \right)
 \geq
 (1-\epsilon)\left(1 \wedge w_{ij} \right)
\]
where the last inequality follows from recalling $\epsilon=e^{-\epsilon}$. Combining both bounds, we have shown that the SNR model dominates the Chung-Lu model after edge percolation of the latter with retention probability $1-\epsilon$.
For this percolated Chung-Lu model we can apply \cite[Corollary 3.3]{bollobas2007phase}, noting that $
\left\| T_\kappa \right\|=
(1-\epsilon)^2\int_0^1 \operatorname{f}^{2}
=\infty
$
and so we find a giant component in the subgraph with high probability.

If instead $1<\int \operatorname{f}^2<\infty$ we apply \cite[(6.8.13)]{van2016random} to say that this graph contains the GRG version with edge probabilities
\[
p_{ij}=\frac{\operatorname{f}\left( \tfrac{i}{N} \right) \operatorname{f}\left( \tfrac{j}{N} \right)}{N+\operatorname{f}\left( \tfrac{i}{N} \right) \operatorname{f}\left( \tfrac{j}{N} \right)}.
\]

By \cite[Theorem 6.10]{van2016random} the empirical degree distribution of this graph converges in $L^1$ to the mixed Poisson law $D\sim\operatorname{Pois}(W)$ where
\[
W
\stackrel{({\rm d})}{=}
\operatorname{f}\left(
U_{[0,1]}
\right)
\int_0^1 \operatorname{f}
\]
and $U_{[0,1]}$ denotes a random variable uniformly distributed on $[0,1]$. By \cite[Theorem 6.15]{van2016random} this graph is exactly uniformly distributed, conditional on its degrees, among simple graphs with these degrees.

Then \cite[Theorem 1]{bollobas2015old} tells us that the uniform simple graph has a giant component when the \emph{size-biased} limit law of its degree distribution has mean larger than $2$. Recalling the $\operatorname{Pois}(\lambda)$ second moment is $\lambda+\lambda^2$, this is when
\[
2<
\frac{\mathbb{E}\left( D^2\right)}{\mathbb{E}\left( D\right)}
=
\frac{\mathbb{E}\left( \operatorname{f} \left( U_{[0,1]} \right)\right)+\mathbb{E}\left( \operatorname{f} \left( U_{[0,1]} \right)^2\right)\int_0^1 \operatorname{f}}{\mathbb{E}\left( \operatorname{f} \left( U_{[0,1]} \right)\right)}=1+\int_0^1 \operatorname{f}^2
\]
which is the given condition. Then, because we have found a subgraph with a giant component, we conclude that the SNR model must also have a giant.
\end{proof}

The idea behind the diameter bound of Theorem \ref{my_diameter_bound} is that if we have an independent positive probability to mark evey vertex in a graph and then identify every marked vertex, the resultant multigraph has componentwise diameter $O_{\mathbb{P}}(\log N)$. We prove this in the following lemma.

\begin{lemma}\label{percolation_lemma}
Fix $p \in (0,1)$. On a sequence of graphs on $[N]$, mark each vertex with independent probability $p$. Then for any vertex define $D(v)$ as the minimum distance to a marked vertex. We have
\[
\max_{v} \left( D(v) \wedge \operatorname{diam}\mathscr{C}(v) \right) =O_{\mathbb{P}}(\log N).
\]
\end{lemma}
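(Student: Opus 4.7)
The plan is to split, for each fixed vertex $v \in [N]$, into two cases depending on whether $\operatorname{diam}\mathscr{C}(v)$ lies above or below a target threshold $C \log N$, where the constant $C > 0$ will be chosen at the end. In the small-diameter case we trivially have $D(v) \wedge \operatorname{diam}\mathscr{C}(v) \leq \operatorname{diam}\mathscr{C}(v) \leq C \log N$ and there is nothing more to do. The interesting case is $\operatorname{diam}\mathscr{C}(v) > C \log N$, in which we must argue that $D(v) \leq C \log N$ fails only with probability much smaller than $1/N$.

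The key geometric observation in the interesting case is that, by the triangle inequality, if $\operatorname{diam}\mathscr{C}(v) > C \log N$ then $v$ itself has eccentricity at least $\lceil C \log N / 2 \rceil$ inside its component: any pair $u,w \in \mathscr{C}(v)$ witnessing the diameter must have at least one of $d(v,u), d(v,w)$ exceeding $C \log N / 2$. Along a geodesic of that length starting at $v$ we exhibit distinct vertices at each integer distance $0, 1, \ldots, \lceil C \log N / 2 \rceil$, so the ball $B_{\lceil C \log N / 2 \rceil}(v)$ contains at least $\lceil C \log N / 2 \rceil + 1$ vertices.

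Since the markings are independent $\operatorname{Bernoulli}(p)$ (and independent of the graph, so we may condition on the graph realisation), the probability that none of these guaranteed $\Omega(\log N)$ vertices in $B_{\lceil C \log N / 2 \rceil}(v)$ is marked is bounded by $(1-p)^{\lceil C \log N / 2 \rceil + 1}$. Fixing $C$ large enough that $(1-p)^{C \log N / 2} \leq N^{-2}$, i.e.\ $C \geq 4 / \log \tfrac{1}{1-p}$, and taking the union bound over the $N$ choices of $v$, the probability that some vertex falls into the bad configuration is at most $1/N$, which vanishes. This completes the proof sketch.

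The only real subtlety is that $v$ need not itself realise the diameter of its component, which is why the triangle inequality enters to transfer ``large diameter'' into ``large eccentricity at $v$'' at the cost of a factor of two; this accounts for the $\lceil C \log N / 2 \rceil$ that appears above. No other step is delicate: the estimate is uniform in the graph, and the Bernoulli independence makes the final union bound completely routine.
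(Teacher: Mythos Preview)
Your proof is correct and follows essentially the same route as the paper: both arguments observe that if $\operatorname{diam}\mathscr{C}(v)$ exceeds the threshold then $v$ sits at the end of a simple path of length $\Theta(\log N)$, and then bound the probability that no vertex on that path is marked by $(1-p)^{\Theta(\log N)}$. The only methodological difference is in how the per-vertex bounds are aggregated: you use a direct union bound over $v\in[N]$, whereas the paper applies Harris' inequality to the increasing events $E(v)=\{P(v)\text{ contains a mark}\}$ to get $\mathbb{P}\bigl(\bigcap_v E(v)\bigr)\geq \prod_v \mathbb{P}(E(v))$. Your union bound is the more elementary choice and avoids the positive-association machinery entirely; the Harris route is not needed here since the individual failure probabilities are already $O(N^{-2})$.
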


\begin{proof}
We now construct the sets in which to observe arrivals. Any vertex $v$ has a path from it of length at least $\operatorname{diam}\mathscr{C}(v)/2$, and so if we set
\[
V_x:=\{v \in [N] : \operatorname{diam}\mathscr{C}(v) \geq 2x \}
\]
we can give each vertex $v \in V_x$ a simple path $P(v)$ with $|P(v)|=x$ and $v$ at one of the ends of $P(v)$. Define $E(v)$ to be the event that $P(v)$ contains a marked vertex, and then
\[
\mathbb{P}(D(v)< x)\geq \mathbb{P}\left(E(v)\right) = 1- (1-p)^{x}
\]
so that if we then set $x=\left\lceil \frac{2\log N}{-\log(1-p)}\right\rceil=\Theta(\log N)$ and apply Harris' inequality \cite{harris1960lower}

\[
\begin{split}
\mathbb{P} \left( \forall v \in V_x : \ D(v)< x \right) &\geq \mathbb{P}\left(\bigcap_{v \in V_x}E(v)\right) \geq \prod_{v \in V_x} \mathbb{P}\left(E(v)\right)\\
&= \left( 1-(1-p)^{x} \right)^N \geq \left( 1-\frac{1}{N^2} \right)^N \rightarrow 1.\\
\end{split}
\]
\end{proof}

For the purposes of proving results with polylogarithmic corrections, it is frequently useful to find a large set of vertices which can be uniformly treated as of degree $O_{\mathbb{P}}^{\log N}(1)$.

\begin{lemma}\label{lemma_log_degrees}
For the SNR network $G_N$ with any parameters $\beta>0$, $\gamma \in (0,1)$ we find, for any $\epsilon>0$ and $\alpha \geq \nicefrac{1}{\gamma}$,
\[
\max_{v \geq \epsilon N \log^{-\alpha} N} \de(v) =
O_{\mathbb{P}}\left( \log^{\alpha\gamma} N \right).
\]
\end{lemma}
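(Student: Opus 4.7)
The plan is to dominate each degree in $G_N$ by a Poisson variable, estimate its mean, and then apply a Chernoff tail bound together with a union bound over vertices.

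First I would note that the SNR degree of any vertex $v$ is stochastically dominated by the corresponding degree in the Multigraph Norros--Reittu graph with the same weight function: flattening (both of loops and of multiple edges) can only decrease degrees. In the MNR the number of edges from $v$ to $[N]\setminus\{v\}$ is a single Poisson random variable with parameter
\[
\mu_v \;=\; \sum_{j \neq v} \beta N^{2\gamma-1} v^{-\gamma} j^{-\gamma}
\;\leq\; \frac{\beta}{1-\gamma}\bigl(1+o(1)\bigr)\bigl(N/v\bigr)^{\gamma},
\]
using $\sum_{j=1}^N j^{-\gamma} \sim N^{1-\gamma}/(1-\gamma)$. Restricting to $v \geq \epsilon N \log^{-\alpha} N$ gives the uniform bound $\mu_v \leq C_0 \log^{\alpha\gamma} N$ for $C_0 = 2\beta/((1-\gamma)\epsilon^\gamma)$ and all $N$ large enough.

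Next I would apply the standard Chernoff bound $\mathbb{P}(\mathrm{Pois}(\mu) \geq t) \leq (e\mu/t)^{t}$ (valid for $t \geq \mu$) with $t = C\log^{\alpha\gamma} N$ for a constant $C$ to be chosen. This gives
\[
\mathbb{P}\bigl(\deg(v) \geq C\log^{\alpha\gamma} N\bigr)
\;\leq\; \bigl(eC_0/C\bigr)^{C\log^{\alpha\gamma} N}
\;=\; \exp\bigl(-C\log(C/(eC_0))\cdot \log^{\alpha\gamma} N\bigr).
\]
Because the hypothesis $\alpha \geq 1/\gamma$ gives $\alpha\gamma \geq 1$, we have $\log^{\alpha\gamma} N \geq \log N$, so choosing $C$ large enough that $C\log(C/(eC_0)) > 2$ bounds the probability above by $N^{-2}$.

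Finally, a union bound over at most $N$ vertices $v \geq \epsilon N \log^{-\alpha} N$ shows that the event $\{\max_{v \geq \epsilon N \log^{-\alpha} N} \deg(v) \leq C\log^{\alpha\gamma} N\}$ holds with probability at least $1 - N^{-1} \to 1$, completing the proof. There is no real obstacle here: the delicate points are only (i) verifying that Poisson domination indeed survives the flattening operation in the SNR construction and (ii) observing that the condition $\alpha\gamma \geq 1$ is exactly what is needed to convert the Chernoff tail into a bound usable with a union bound over $N$ vertices.
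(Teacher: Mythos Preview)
Your proof is correct and follows essentially the same route as the paper: dominate the SNR degree by the MNR Poisson degree, bound the Poisson mean uniformly by $O(\log^{\alpha\gamma}N)$ on the range $v\geq \epsilon N\log^{-\alpha}N$, apply a Chernoff-type tail bound, and finish with a union bound over at most $N$ vertices. The only cosmetic difference is that the paper computes the exponential moment $\mathbb{E}(e^{C\,\de(v)})$ directly with a tuned $C$, whereas you invoke the packaged Poisson bound $\mathbb{P}(\mathrm{Pois}(\mu)\geq t)\leq(e\mu/t)^t$; both use the hypothesis $\alpha\gamma\geq 1$ in the same way to turn the tail into $o(1/N)$.
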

\begin{proof}
The Norros-Reittu network of Definition \ref{def_SNR} is a simplified version of the multigraph version of Definition \ref{def_mnr} with $\operatorname{f}(x)=\sqrt{\beta}x^{-\gamma}$. This model has degree distributions $\de(v) \sim \operatorname{Pois}\left( w(v) \right)$, where
\[
w(v)
=
\frac{1}{N}
\operatorname{f}\left(
\frac{v}{N}
\right)
\sum_{x=1}^N \operatorname{f}\left(
\frac{x}{N}
\right)
<
\beta N^{2\gamma-1} v^{-\gamma}
\int_0^N x^{-\gamma} {\rm d} x
=
\frac{\beta}{1-\gamma}\left(\frac{N}{v}\right)^\gamma
\]
so that
\[
\max_{v \geq \epsilon N \log^{-\alpha} N} w(v)
<
\frac{\beta \epsilon^{-\gamma}}{1-\gamma} \log^{\alpha\gamma} N.
\]

Recall also that in the MNR version we have exactly degrees with Poisson distribution according to their MNR weights. Hence, because the SNR version is constructed from the MNR version by flattening, we have the stochastic order
\[
\de(v) \preceq \operatorname{Pois}\left(\frac{\beta \epsilon^{-\gamma}}{1-\gamma} \log^{\alpha\gamma} N\right).
\]

If we fix a constant $C$ with $e^C-1=\frac{1-\gamma}{\beta \epsilon^{-\gamma}}$ then we conclude
\[
\mathbb{E}\left(e^{C \de(v)}\right) < e^{\log^{\alpha\gamma} N},
\]
and so by the Chernoff bound we deduce that $C\de(v)< 3 \log^{\alpha\gamma} N$ with probability $1-o\left(\nicefrac{1}{N}\right)$. The result follows by the union bound.
\end{proof}

Large degrees $\de(v)$ in the network are well approximated by $\left(\frac{N}{v}\right)^\gamma$, and so in the following lemma we check that a large proportion of edges from every high degree vertex are pointing into the Erd\H{o}s-R\'enyi giant.

\begin{lemma}\label{lemma_high_internal_edges}
Let $\Gamma(v)$ denote the neighbourhood 
$
\Gamma(v)=\left\{w \in [N] : w \sim v \right\}.
$ 
Then if we take $\epsilon$ small enough such that $\beta \rho \epsilon^{-\gamma}>32$, in the construction of the proof of Theorem \ref{theorem_erw_mixing} we have
\[
\min_{v \in H_\epsilon} \frac{\left|\Gamma(v)\cap \sC_1\right|}{\left(\frac{N}{v}\right)^\gamma}
=
\Omega_{\mathbb{P}}\left(\frac{1}{\log N} \right).
\]
\end{lemma}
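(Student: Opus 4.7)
The plan is to combine Poissonian independence with a Chernoff concentration tight enough to beat a union bound over $H_\epsilon$. First I would exploit the MNR decomposition already used in the construction of $\sG_1$: the Poisson intensity $\tfrac{\beta}{N}(N/i)^\gamma(N/j)^\gamma$ on each pair is split into the constant part $\beta/N$, used only on pairs in $[N]\setminus H_\epsilon$ to build the Erd\H{o}s--R\'enyi subgraph and hence $\sC_1$, and a ``remainder'' Poisson process carrying the rest of the intensity (in particular all edge intensity with one endpoint in $H_\epsilon$). These two processes are independent, and since $\sC_1 \subseteq [N]\setminus H_\epsilon$ is a functional of the constant part only, for any fixed $v \in H_\epsilon$ the SNR indicators $(\mathbbm{1}_{v \sim w})_{w \in \sC_1}$ are conditionally independent given $\sC_1$, each with marginal probability
\[
p_{vw} \;=\; 1 - \exp\!\bigl(-\tfrac{\beta}{N}(N/v)^\gamma(N/w)^\gamma\bigr).
\]

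Next I would produce a lower bound on the conditional mean. Restricting $w$ to the upper half $U := \{\lceil N/2\rceil,\dots,N\}$, the exponent is bounded by $\beta 2^\gamma N^{\gamma-1} = o(1)$, so $1-e^{-x}\geq x/2$ for small $x$ gives $p_{vw} \geq \tfrac{\beta}{2N}(N/v)^\gamma$. Repeating the Erd\H{o}s--R\'enyi domination argument from the proof of Theorem \ref{theorem_erw_mixing} that produced $|\sC_1| \geq (1-\delta)\rho N$ with high probability, but restricted to the upper half of $[N]\setminus H_\epsilon$, yields $|\sC_1 \cap U| \geq (1-\delta)\rho N/2$ with high probability. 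Summing $p_{vw}$ over $w \in \sC_1 \cap U$ on this event,
\[
\mu \;:=\; \mathbb{E}\bigl[|\Gamma(v)\cap \sC_1|\,\bigm|\,\sC_1\bigr] \;\geq\; c_0\,(N/v)^\gamma, \qquad c_0 \;:=\; \frac{\beta(1-\delta)\rho}{4}.
\]

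Finally I would concentrate and union-bound. For $v \in H_\epsilon$ one has $(N/v)^\gamma \geq \epsilon^{-\gamma}\log N$, so $\mu \geq c_0\epsilon^{-\gamma}\log N$; the numerical hypothesis $\beta\rho\epsilon^{-\gamma}>32$ is precisely what makes $c_0\epsilon^{-\gamma}\geq 6$ (for $\delta$ small), giving $\mu \geq 6\log N$. The multiplicative Chernoff lower-tail bound at ratio $1/\log N$ then gives
\[
\mathbb{P}\bigl(|\Gamma(v)\cap\sC_1| < \mu/\log N \,\bigm|\, \sC_1\bigr) \;\leq\; \exp\!\bigl(-\mu(1-1/\log N)^2/2\bigr) \;\leq\; N^{-2}
\]
for $N$ large, and since $|H_\epsilon|\leq N$ a union bound yields $\min_{v \in H_\epsilon}|\Gamma(v)\cap\sC_1|/(N/v)^\gamma \geq c_0/\log N$ with high probability, which is the claimed $\Omega_{\mathbb{P}}(1/\log N)$.

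The main obstacle is the first step's bookkeeping: one must confirm that the Poisson decomposition survives the flattening MNR$\to$SNR, so that the edges from $H_\epsilon$ into $\sC_1$ are genuinely conditionally independent given $\sC_1$ with no contamination from the edge realisations that built $\sC_1$. This works cleanly only because $\sC_1$ is disjoint from $H_\epsilon$, so that every relevant pair $\{v,w\}$ with $v\in H_\epsilon$, $w\in \sC_1$ picks up its edge count from the remainder process alone. Once that independence is locked in, the rest is a routine concentration plus union-bound, with the constant $32$ in the hypothesis calibrated to provide exactly the slack needed to absorb the $\log N$ denominator.
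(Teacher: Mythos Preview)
Your proposal is correct and follows essentially the same route as the paper's proof: conditional independence of the SNR edges from $v \in H_\epsilon$ into $\sC_1$ (valid because $\sC_1$ was built only from edges inside $[N]\setminus H_\epsilon$), a binomial lower bound on $|\Gamma(v)\cap\sC_1|$ with mean $\Theta((N/v)^\gamma)$, and a multiplicative Chernoff bound tight enough to survive a union bound over $H_\epsilon$. The only notable difference is that the paper avoids your restriction to the upper half via the inequality $1-e^{-p}\geq p/(1+p)$ and runs the Chernoff bound at threshold $\mu/2$ rather than $\mu/\log N$---since $\mu/8 \geq \tfrac{\beta\rho\epsilon^{-\gamma}}{32}\log N > \log N$ directly from the hypothesis (this is exactly what the constant $32$ is calibrated for), the paper in fact establishes the stronger $\Omega_{\mathbb{P}}(1)$ bound, while your choice yields precisely the stated $\Omega_{\mathbb{P}}(1/\log N)$.
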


\begin{proof}

Between any  $v\leq\epsilon N \log^{-\frac{1}{\gamma}} N$ and any $w \in \sC_1$ we expect in the MNR a number of edges at least
\[
\frac{\beta}{N}
\left(\frac{v}{N}\right)^{-\gamma}.
\]

Working on the high probability assumption that $\left| \sC_1 \right| \geq \tfrac{\rho}{2} N$ and using that the SNR graph dominates the GRG graph (i.e. that $1-e^{-p}\geq \frac{p}{1+p}$) we claim
\[
\left|\Gamma(v)\cap \sC(1)\right|
\succeq
\operatorname{Bin}
\left(
\frac{\rho}{2} N,
\frac{\frac{\beta}{N}
\left(\frac{v}{N}\right)^{-\gamma}
}{1+\beta N^{\gamma-1}}
\right)
\succeq
\operatorname{Bin}
\left(
\frac{\rho}{2} N,
\frac{\frac{\beta}{N}
\left(\frac{v}{N}\right)^{-\gamma}
}{2}
\right).
\]

Hence by the usual multiplicative Chernoff bound these edges number less than $\frac{\beta \rho }{8} \left(\frac{N}{v}\right)^\gamma$ with probability asymptotically bounded by
\[
\exp \left(
-\frac{1}{8} \cdot \frac{\beta \rho}{4} \left(\frac{N}{v}\right)^\gamma
\right)
\leq
\exp \left(
-\frac{\beta \rho}{32} \epsilon^{-\gamma} \log N
\right)
=o\left(\frac{1}{N}\right),
\]
by the assumption $\beta \rho \epsilon^{-\gamma}>32$. The conclusion follows from the union bound over every vertex in $H_\epsilon$.
\end{proof}

Finally, we have a fairly simple adaptation of \cite[Proposition 14.11]{aldous-fill-2014} to 
reducible Markov chains.

\begin{proposition}\label{prop:coal} On a disconnected graph with components $\{C_j : j \in [k]\}$ we find
	\[ \sup_{j \in [k]} t_{\rm meet}(C_j) \leq t_{\rm coal} \leq e(2+ \log N ) \sup_{j \in [k]} t_{\rm meet}(C_j) .\]
\end{proposition}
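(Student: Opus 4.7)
The plan is to handle the two sides of the inequality separately: the lower bound is essentially immediate, and the upper bound follows by a component-aware adaptation of the halving argument proving \cite[Proposition 14.11]{aldous-fill-2014}.

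For the lower bound, observe that the components $C_j$ are vertex-disjoint, so no coalescing walker ever crosses between them and hence $\tau_{\rm coal}=\max_j \tau_{\rm coal}(C_j)$ almost surely. In particular $\E\tau_{\rm coal}\geq \E\tau_{\rm coal}(C_j)$ for every $j$, and the classical single-chain inequality $t_{\rm coal}(C_j)\geq t_{\rm meet}(C_j)$ immediately yields the claim.

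For the upper bound the plan is to run the Aldous-Fill halving step across all components simultaneously. Set $T_\star := e\sup_j t_{\rm meet}(C_j)$, let $M_j(t)$ be the number of walkers alive in $C_j$ at time $t$, and pair the walkers up \emph{inside each individual component}. Since every pair sits inside a single $C_j$, its expected meeting time is at most $t_{\rm meet}(C_j)\leq T_\star/e$, and Markov's inequality gives that each pair fails to meet during the window $[t,t+T_\star]$ with probability at most $1/e$. A routine computation -- summing the pairings across components and using that every successful meeting reduces the total walker count by at least one -- then produces a contraction
\[ \E\bigl[E(t+T_\star)\mid \mathcal F_t\bigr] \leq \gamma\, E(t),\qquad \gamma = \tfrac{1}{2}(1+e^{-1})<1, \]
on the excess walker count $E(t):=\sum_j (M_j(t)-1)$. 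Iterating this contraction $O(\log N)$ times drives $\E[E]$ below $1$, and tracking the constants exactly as in Aldous-Fill reproduces the prefactor $e(2+\log N)$.

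The main obstacle is that the usual halving lemma uses the single quantity $t_{\rm meet}$ to control every pair of walkers, which fails verbatim on a disconnected graph because cross-component pairs never meet. The resolution is structural: all pairings are forced to be intra-component, so that each pair's meeting time is genuinely bounded by some $t_{\rm meet}(C_j)\leq \sup_{j'}t_{\rm meet}(C_{j'})$. Crucially, because the dynamics on distinct components are independent, the contraction above acts on every component during the same window of length $T_\star$; only a single supremum appears on the right-hand side, which is exactly what prevents picking up the extra $\log k$ factor one would incur by a naive union-bound reduction to the connected case on each $C_j$ separately.
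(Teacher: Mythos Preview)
Your lower bound is fine. Your upper bound takes a genuinely different route from the paper's, and your constant claim does not survive.

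The paper does \emph{not} use a halving/contraction argument. It fixes for each vertex $i$ the target $f(i):=\min\mathscr{C}(i)$, couples the coalescing system to independent walkers so that $\tau_{\rm coal}\le\max_i \tau^{\,i,f(i)}_{\rm meet}$, applies the exponential tail bound \cite[(2.20)]{aldous-fill-2014} to each of these $N$ (intra-component) meeting times, takes a union bound $\mathbb{P}(\tau_{\rm coal}>t)\le N\exp(-\lfloor t/(e\,t_{\rm meet})\rfloor)$, and integrates $1\wedge Ne\,e^{-t/(e\,t_{\rm meet})}$ to get exactly $e(2+\log N)\,t_{\rm meet}$. This is also the method actually used in \cite[Proposition~14.11]{aldous-fill-2014}, so your appeal to ``exactly as in Aldous--Fill'' points to the wrong argument.

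Your halving scheme is valid and does yield a bound $C\log N\cdot\sup_j t_{\rm meet}(C_j)$, which is all the paper's applications require. But it cannot reproduce the constant $e(2+\log N)$: with $\gamma=(1+e^{-1})/2$ one has $1/\log(1/\gamma)\approx 2.63$, so driving $\mathbb{E}[E(t)]$ from $N$ below $1$ costs about $2.63\log N$ windows of length $T_\star=e\sup_j t_{\rm meet}(C_j)$, and the residual geometric tail adds roughly $1/(1-\gamma)\approx 3.16$ more. The leading constant you obtain is therefore about $2.63e$, not $e$. The exponential-tail route is sharper precisely because its decay rate is $1/(e\,t_{\rm meet})$, whereas your per-window contraction rate is only $\log(1/\gamma)/T_\star$. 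If you want to keep the halving proof, state the bound with an unspecified absolute constant; if you want the stated constant, switch to the paper's method.
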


\begin{proof}
The reversible Markov chain decomposes into irreducible recurrence classes - write $\mathscr{C}(i)$ for the class containing the state $i$. As in the proof of \cite[Proposition 14.11]{aldous-fill-2014}, consider a walker $W^{(i)}$ independently started in $i$. We have $\nicefrac{N(N+1)}{2}$ meeting times
\begin{equation}\label{eq:meeting_ij}
\tau^{i,j}_{\text{meet}}:=\inf
\left\{
t \geq 0 :
W^{(i)}_t=W^{(j)}_t
\right\}
\end{equation}
for the walkers $1 \leq i \leq j \leq N$, where we define $\inf \emptyset := \infty$ and $\tau^{i,i} := 0$. Define a function $\operatorname{f}$ which maps all elements in a recurrence class $\mathscr{C}(i)$ to a label $\min \mathscr{C}(i)$ which is of lowest index in that component
\[
\operatorname{f}: i \mapsto \min \mathscr{C}(i).
\]

Then we can construct the coalescing walker from independent walkers by killing the walker of larger initial position at any meeting event, which we think of as making it follow the vertex of smaller initial position. Thus we can say, for the non-independent walker meeting times obtained in this construction,
\[
\tau_{\text{coal}}:=\max_{i=1}^N \tau_{\text{coal}}(\mathscr{C}(i))\leq \max_{i=1}^N \tau^{i,f(i)}_{\text{meet}}.
\]

We then apply a result for the general exponential tails of hitting times of finite Markov chains \cite[Equation 2.20]{aldous-fill-2014}: from arbitrary initial distribution $\mu$ and for a continuous time reversible chain, for any subset $A\subset V$
\[
\mathbb{P}_{\mu}(T_A>t)\leq \exp \left( - \left\lfloor \frac{t}{e \max_v \mathbb{E}_v T_A } \right\rfloor  \right).
\]

For the meeting time variables, which are hitting times for the product chain, this leads to
\[
\mathbb{P}(\tau^{i,j}_{\text{meet}}>t)\leq \exp\left( - \left\lfloor \frac{t}{e t_{\text{meet}}} \right\rfloor \right) . 
\]

We can deduce by the union bound that
\[\mathbb{P}(\tau_{\text{coal}}>t)
\leq \sum_{i=1}^N \mathbb{P}\left(\tau^{f(i),i}_{\text{meet}}>t\right)
\leq N\exp\left( - \left\lfloor \frac{t}{e t_{\text{meet}}} \right\rfloor \right).
\]

Finally, we integrate as in \cite[Proposition 14.11]{aldous-fill-2014} to get
\[
t_{\rm coal}\leq
\int_0^\infty
1 \wedge
\left(
N e \exp\left( - \frac{t}{e t_{\text{meet}}} \right)
\right) {\rm d} t
=
e \left(
2+\log N
\right) t_{\text{meet}}, 
\]
which proves the upper bound.
\end{proof}

\end{appendices}

\end{document}